\newcommand{\Rmnum}[1]{\expandafter\@slowromancap\romannumeral #1@}
\newcommand{\W}{\mathcal{W}}
\numberwithin{theorem}{section}
\newcommand{\TheTitle}{Decomposition Algorithms for Distributionally Robust Optimization using Wasserstein Metric} 
\newcommand{\TheAuthors}{Fengqiao Luo and Sanjay Mehrotra}
\title{{\TheTitle}\thanks{Submitted to the editors DATE.
\funding{This research was partially supported by NSF grants CMMI-1362003
and CMMI-1100868. 
}}}
\author{
  Fengqiao Luo\thanks{Department of Industrial Engineering and Management Science, Northwestern University, Evanston, IL, 60208
    (\email{fengqiaoluo2014@u.northwestern.edu}).}
  \and
  Sanjay Mehrotra\thanks{Department of Industrial Engineering and Management Science, Northwestern University, Evanston, IL, 60208 (\email{mehrotra@iems.northwestern.edu}).}
 }
\begin{document}

\maketitle

\begin{abstract} 
We study distributionally robust optimization (DRO) problems where the ambiguity set is defined using the Wasserstein metric. 
We show that this class of DRO problems can be reformulated as semi-infinite programs.    
We give an exchange method to solve the reformulated problem 
for the general nonlinear model, and a central cutting-surface method for the convex case, assuming that we have a separation oracle. 
We used a distributionally robust generalization of the logistic regression model to test our algorithm. 
Numerical experiments on the distributionally robust logistic regression models show that the number of oracle calls are typically $20\sim 50$ to achieve 5-digit precision. 
The solution found by the model is generally better in its ability to predict with a smaller standard error.
\end{abstract}

\begin{keywords}
 distributionally robust optimization, Wasserstein metric, semi-infinite programming, cutting-surface algorithms, exchange method
 \end{keywords}

\begin{AMS}
 90C34, 90C15,  90C25,  90C26, 65D05, 90C31,  90C05,  90C90
\end{AMS}

\section{Introduction}
\label{sec:introduction}
We consider a distributionally robust optimization problem defined using the Wasserstein metric:
\begin{equation}
\label{opt:DRO}
\underset{\theta\in\Theta}{\textrm{inf}}\;\underset{P\in\mathcal{P}}{\textrm{sup}}\;\mathbb{E}_P[h(\theta,\xi)], \tag{WRO}
\end{equation}
where $\theta$ are the decision variables, and $\xi$ are the model parameters.
We assume that $h(\theta,\xi)$ is a continuous, but possibly a non-convex function in $\theta$.
Additional assumptions are made as results are developed in Sections~\ref{sec:reform} and \ref{sec:alg-SDP}.
Let $\xi$ be a random vector defined on a measurable space $(\Xi,\mathcal{F})$,
where $\Xi$ is the support in $\mathbb{R}^n$, and $\mathcal{F}$ is a $\sigma$-algebra that contains
all singletons in $\Xi$, i.e., $\{\xi\}\in\mathcal{F},\;\;\forall \xi\in\Xi$.  
We let the distribution $\mathcal{P}$ followed by $\xi$ be unknown, but assume that it belongs to an ambiguity 
set $\mathcal{P}$. Let $\mathcal{M}(\Xi,\mathcal{F})$ be the set of all probability distributions on $(\Xi,\mathcal{F})$,
and define $\mathcal{P}$ by using the $L_1$-Wasserstein metric:
\begin{equation}\label{def:ambiguity_set}
\mathcal{P}:=\big \{  P\in \mathcal{M}(\Xi,\mathcal{F})\; \big| \; \W(P, P_0) \le r_0 \big\},
\end{equation}
where $P_0$ is an empirical distribution satisfying: $P_0(\xi^i)=1/m,\;i\in[m]$, 
and $\{\xi^i\in\Xi:\;i\in[m]\}$ are the observed samples of $\xi$, 
and $r_0>0$ is a scalar.  The $L_1$-Wasserstein metric is defined as \cite{givens1984}:
\begin{equation} \label{def:Kantorovich_metric}
\W(P_1, P_2):= \underset{K\in\mathcal{S}(P_1,P_2)}{\textrm{inf}} \int_{\Xi\times \Xi} d\big( s_1, s_2 \big) K(ds_1\times d s_2), 
\end{equation}
where $\mathcal{S}(P_1,P_2):=\big\{ K\in\mathcal{M}(\Xi\times\Xi,\mathcal{F}\times\mathcal{F}):\; K(A\times\Xi) = P_1(A),\;  K(\Xi\times A) = P_2(A),\; \forall A\in\mathcal{F}   \big\}$ is the set of all joint probability distributions whose marginal distributions are $P_1$ and $P_2$, 
and $d(\cdot,\cdot)$ is a metric defined on $\Xi$, which is measurable in $\mathcal{F}\times\mathcal{F}$. 
The set $\Xi$ is partitioned as: $\Xi=\cup^{m+1}_{i=1}\Xi^i$, 
where $\Xi^i=\{ \xi^i\}$ for $i\in[m]$ and $\Xi^{m+1}=\Xi\setminus(\cup_{i\in[m]}\Xi^i)$.  
We call (\ref{opt:DRO}) a Wasserstein-robust optimization problem from hereafter.

Wasserstein metric has been used to study the convergence of an empirical distribution from the i.i.d. samples to the true
distribution. Specifically, Barrio et al. \cite{barrio1999_CLT-wass} showed that under the Wasserstein metric the empirical distribution converges to the true distribution almost surely as the number of samples go to infinity. Fournier and Guillin \cite{fournier2014_rate-convg-wass}
have further shown that if the true distribution $P_{\textrm{true}}$ has a light tail,  then the probability $\textrm{Pr}\big\{ W\big( P_0, P_{\textrm{true}} \big)\ge r_0 \big\}$ can be bounded from above by a function that decays exponentially with the sample size and $r_0$.

The \eqref{opt:DRO} modeling framework allows to protect against ambiguity in the distribution when arriving at a decision. 
It also allows to perform sensitivity analysis with respect to the empirical distribution. This is useful particularly when the sample size $m$ is small.

\subsection*{Contributions and organization of this paper} 
This paper makes the following contributions:
\begin{enumerate}
	\item It is common in robust optimization to dualize the inner problem to develop a reformulation of the original model \cite{book_robust_opt2009,Ben-Tal-1998_rob-conv-opt,Ben-Tal2002_rob-opt-meth-app,Ben-Tal2002_rob-sol-QCQP}. The definition of Wasserstein metric in \eqref{def:Kantorovich_metric} uses a semi-infinite number of equality constraints, therefore its direct use is not suitable for dualization of the inner problem. We  prove in Section~\ref{sec:reform} that the inner problem in (\ref{opt:DRO}) is equivalent to a conic linear optimization problem. We show that this conic program can be dualized with no duality gap, thus obtaining a semi-infinite programming reformulation of (\ref{opt:DRO}). The constraint sets in this semi-infinite program decompose in the observed samples.  The decomposition allows for an independent separation problem for each observed sample $\xi^i$ for algorithms that use cuts to solve the reformulated semi-infinite program. The results in Section~\ref{sec:reform} only assume that $h(\theta,\xi)$ is a continuous and bounded function in $\theta$, and $\Xi$ is a compact set (See Assumption~\ref{ass:theta_Xi_compact_h_bounded}). These results, for example, are applicable for problems involving mixed-integer variables used in the definition of $\Xi$. 
	
	\item We adapt the exchange algorithm and the central cutting-surface algorithm from \cite{hettich1993_SIP-thy-methd-appl} and \cite{mehrotra2015} in Sections~\ref{sec:exchange_method} and \ref{sec:cutting-surface-alg} for the general and convex cases, respectively. We show finite convergence of the exchange method to a solution with a desirable accuracy. The linear rate of convergence for the cutting surface algorithm presented here exploits the structure of the semi-infinite program.
Specifically, a global linear rate of convergence is proved. 	
	
	\item In Section~\ref{sec:comp-perform} we present results on the computational performance of the central cutting-surface algorithm for solving WRLR problems. We find that the number of oracle calls are typically $20\sim 50$, and the number of cuts added to the model are typically $3\sim 10$ times the number of training samples. The solution time is $\lesssim 100$ times that of solving the ordinary logistic regression model.  
	
	In Section~\ref{sec:forecast-perform}, we present performance results on the quality of model obtained by the Wasserstein-robust logistic regression (WRLR) approach and compare it with the performance of the ordinary logistic regression (LR). Our motivation is to study a setting in which the number of available samples is small, and robustness is used to understand and possibly improve the quality of the trained model. This is typically the case at an early
stage of a study (e.g., in healthcare) when limited data is available due to data collection expenses. We use $m$ to be $\{50, 75, 100, 150\}$ in the numerical experiments to test the performance of the Wasserstein-robust logistic regression model (See Section~\ref{sec:num_exp}). Eleven data sets from UCI Machine Learning Repository are used. We use area under the receiver operator characteristic curve (AUC) to evaluate the performance of the models \cite{Zweig1993_ROC-rev}. We find that the Wasserstein-robust logistic regression WRLR model has a significantly better out of sample performance than logistic-regression model (with $\alpha=0.05$) in 24 $(55\%)$ cases. The predictive performance of WRLR is worse in 7 $(16\%)$ cases ($\alpha=0.05$), and for the remaining 13 $(29\%)$ cases the difference is not statistically significant. The WRLR models also have smaller standard error when compared to logistic regression, suggesting that the model is more robust. 
 
\end{enumerate}

\section{Literature Review}
\label{sec:liter_rev}
We now provide a literature review of prior work on distributionally robust optimization (DRO) and semi-infinite programming.
These topics are relevant because the \eqref{opt:DRO} problem is reformulated as a semi-infinite program, and a separation oracle is 
needed in the algorithms. 

    \subsection{Distributionally robust optimization}
    Distributional robust optimization is a generalization of robust optimization (RO) \cite{bertsimas2013_data-driven-rob-opt,goh2010_DRO-tract-approx,xu2012-DR-interpret-rob-opt}, where an ambiguity set is used to model problem data distribution. The use of an ambiguity set in distributionally robust optimization overcomes the weakness of traditional robust optimization framework, as the RO model is often considered to be very conservative. 
Even when a deterministic model is a convex optimization problem, its DR-counterpart is NP-hard in most cases \cite{book_robust_opt2009}.     
Therefore, literature on RO and DRO either makes assumptions on the function form of 
the objective and constraints with respect to uncertain parameters to ensure the convexity of the model \cite{Ben-Tal-1999_rob-lin-prog,bertsimas2005_opt-ineq-prob}, 
or purposes convex reformulations to approximate original problems \cite{goh2010_DRO-tract-approx,wiesemann2015_dist-rob-conv-opt}.  
    
Studies of DRO focus on ways of defining the ambiguity set, reformulations of these models into computationally tractable problems, probability guarantee of the constraint satisfaction by the true distribution, and applications. We briefly review these aspects and the literature below.    
    \subsubsection{The ambiguity set}	
    A common approach to describe ambiguity sets is to use moments of the distribution followed by the random parameters \cite{bertsimas2010_models-minmax-stoch-LP-risk-aver,birge1987,delage2010_distr-rob-opt-mom-uncer, dupacova1987_minmax-stoch, mehrotra2015, book_stoch_prog_1995, shapiro2004_class-of-minimax-analysis-stoch-progs, shapiro2002_minimax-analysis-stoch-probs}. Bertsimas and Popescu \cite{bertsimas2005_opt-ineq-prob} discuss properties of probability distributions satisfying such constraints. It is also possible to use a \emph{statistical distance} as a way of measuring the difference of two probability distributions. The statistical distances used in the DRO models are Wasserstein metric \cite{esfahani2015_distr-rob-wass-metric,mehrotra2014_model-algthm-distr-rob-least-square,pflug2012_1-over-N-invest-optimal-under-ambiguity, pflug2007_ambig-in-portfolio-selection, s-abadeh2015_distr-rob-log-reg,wozabal2012_framework-opt-under-ambiguity}, $\phi$-divergence, $\chi^2$-distance, Kullback-Leibler divergence \cite{ben-tal2013_rob-opt-uncert-prob,calafiore2007_amb-risk-meas-opt-rob-portf,jiang2015_risk-aver-two-stage-stoch-prog-distr-amb,love2016_phi-diverg-constr-amb-stoch-prog-data-driven-opt,wang2016_likehd-rob-opt-data-driven,yanikoglu2013_safe-approx-amb-constr-hist-data}, and the Prokhorov metric \cite{erdogan2006_ambig-chance-constr-rob-opt}. 
     \subsubsection{Reformulation of DRO models}
      Shapiro and Kleywegt \cite{shapiro2002_minimax-analysis-stoch-probs} show that under mild regularity conditions, the DRO problem with a deterministic set of constraints is equivalent to a stochastic programming problem based on a probability distribution that is a linear combination of distributions in the ambiguity set of the original DRO problem. It means that solving a DRO problem in this case is equivalent to solving a stochastic programming problem.
Goh and Sim \cite{goh2010_DRO-tract-approx} investigate a two-stage DRO model whose objective has a linear structure, but can be used to express piece-wise linear utility functions and CVaR constraints. They show that by restricting the recourse variables to be affine mappings of uncertain parameters, this two-stage DRO model can be reformulated as a minimax linear programming problem. Delage and Ye \cite{delage2010_distr-rob-opt-mom-uncer} show that DRO problems whose ambiguity sets are defined by the first and second moment inequalities are polynomial-time solvable under the assumption that the objective is convex in the decision variables and concave in uncertain parameters. They also provide semidefinite formulations for the data-driven problems. 
As an often used objective, the least-square loss function is convex in both decision variables and model parameters, hence violating the assumption in \cite{delage2010_distr-rob-opt-mom-uncer}. To overcome this obstacle, Mehrotra and Zhang \cite{mehrotra2014_model-algthm-distr-rob-least-square} give conic and semidefinite programming reformulations of DR-least-squares problems with ambiguity sets defined using the first two moments,
Wasserstein metric, and bounds on the probability density functions, respectively. 
Mehrotra and Papp \cite{mehrotra2015} use the central cutting-surface algorithm they developed 
to solve DRO models where the ambiguity set is specified using arbitrarily many generalized moments.
Wiesemann et al. \cite{wiesemann2015_dist-rob-conv-opt} propose a framework for modeling and solving distributionally robust convex optimization problems, in which the ambiguity set is conically representable and constraint functions are piece-wise affine in both decision variables and random parameters. They show that the reformulated problem is polynomial-time solvable under a strict nesting condition of the confidence sets. Esfahan and Kuhn \cite{esfahani2015_distr-rob-wass-metric} show that using the conic duality theory, the data-driven DRO problem with a Wasserstein-metric ambiguity set can be reduced to finitely many tractable convex optimization problems, if the loss function can be expressed as the point-wise maximum of finitely many concave functions in the uncertain parameter. However, this assumption on the loss function is violated by many statistical learning models, such as the logistic regression model considered in the computational section of this paper. 

Many DRO problems involve robust chance constraints, which are often non-convex. Jiang and Guan \cite{jiang2016_data-driven-chance-constr-stoch-program} study DR-chance constraints defined by the \newline $\phi$-divergences. They show that these constraints are equivalent to ordinary chance constraints based on some nominal probability measure. Hanasusanto et al. \cite{hanasusanto2015_DRO-uncert-quant-chance-constr-prog} show that if the ambiguity set in the robust chance constraint is defined by moments and satisfies a nested condition, the worst-case probability is an optimal solution of a conic optimization problem. 
For a recent review on tractable reformulations of robust chance constraints, see \cite{postek2016_comp-tract-counterparts-DRO-risk-measures}. 

     \subsubsection{The probability bound} 
      Studies on the probability that the true distribution is contained in the ambiguity set are related to the ambiguous chance constrained programming \cite{calafiore2006_DRO-chance-constr-LP,hanasusanto2015_DRO-uncert-quant-chance-constr-prog,zymler2013-DR-joint-chan-constr-sec-ord-mom-inf}. Erdo{\u g}an and Iyengar \cite{erdogan2006_ambig-chance-constr-rob-opt} show that the sampling of robust constraints is a good approximation for the DR-chance-constraint problem with a high probability. Delage and Ye \cite{delage2010_distr-rob-opt-mom-uncer} use the size of the ellipsoid confidence region using the second moment to satisfy a given level of probability guarantee. Esfahan and Kuhn \cite{esfahani2015_distr-rob-wass-metric} give a result on the out-of-sample performance guarantee of the solution to the data-driven DRO problem with a Wasserstein-metric ambiguity set. Calafiore and Ghaoui \cite{calafiore2006_DRO-chance-constr-LP} show that chance constraints of linear inequalities with respect to a radial distribution (i.e., Gaussian distribution and uniform distribution on ellipsoidal support) can be converted explicitly into convex second-order cone constraints. Additionally, they show that distributionally robust chance constraints of linear inequalities under a few important distribution families (distributions with known mean and covariance, radially symmetric non-increasing distributions, etc.) can be guaranteed by some deterministic convex constraints.  
Bertsimas et al. \cite{bertsimas2013_data-driven-rob-opt} propose a novel scheme of constructing uncertainty sets for data-driven robust optimization problems using hypothesis tests. The resulting model is computationally tractable and has application insights regarding using statistical estimate for chance constraint violation. Ben-Tal et al. \cite{ben-tal2013_rob-opt-uncert-prob} show that the robust counterpart of linear optimization problems with uncertainty set defined by $\phi$-divergence are tractable for most choices of $\phi$. Constructing confidence sets using $\phi$-divergence is also studied in \cite{love2016_phi-diverg-constr-amb-stoch-prog-data-driven-opt,yanikoglu2013_safe-approx-amb-constr-hist-data}.

      \subsubsection{Applications of DRO Models}
      Modeling and solutions of the distributionally robust counterparts of deterministic optimization models are investigated in various areas in operations research, including but not limit to: inventory management \cite{zhang2016_DRO-two-stage-lot-sizing},  scheduling and logistics \cite{jaillet2016_routing-opt-under-uncert,liao2013_DR-workforc-scheduling-call-center-uncert-arriv-rate}, and risk management \cite{long2014_DR-discrete-opt-entrop-VaR,natarajan2008_incorp-asym-distr-inf-rob-VaR-opt}. In statistical learning, Lee and Mehrotra \cite{Lee2015_distr-rob-svm} study  distributionally robust linear support vector machine (DR-SVM) models with a Wasserstein-metric ambiguity set. They find that the (DR-SVM) model can be reformulated as a semi-infinite program, in which the master problems are convex quadratic programs and separation problems are linear programs. They also find that (DR-SVM) models have improved generalization capabilities than ordinary (SVM) models. Shaeezadeh-Abadeh et al. \cite{s-abadeh2015_distr-rob-log-reg} investigate a distributionally robust logistic regression model using a Wasserstein-metric ambiguity set. For the model in \cite{s-abadeh2015_distr-rob-log-reg}, the uncertainty set of the attribute vector is assumed to be the entire $\mathbb{R}^n$, and the authors show that  the semi-infinite constraints in the dual problem are equivalent to a single constraint obtained using the conjugate function. The assumption in \cite{s-abadeh2015_distr-rob-log-reg} is not practical in settings such as healthcare, where certain physiological variables (e.g., heart rate, blood pressure) must necessarily be bounded. In the framework of this paper, the uncertainty set is assumed to be compact. Additionally, feature variables can be integer-valued.

\subsection{Theory and numerical methods for semi-infinite programming}
The study of distributional robust optimization models considered in this paper benefit from the known literature on semi-infinite programming.
Semi-infinite programming (SIP) problems are optimization problems with constraints induced by a continuous parameter.
The study of SIP is initialized by the work of Haar~\cite{haar1924_uber-lin-ung} and Charnes~\cite{charnes1962_duality-haar-progs-finite-seq-space,charnes1963_duality-SIP,charnes1963_SIP-thy-KKT} focusing on linear-SIP problems. Later, the first and second order optimality conditions of general SIP were given in \cite{hettich1977_first-sec-order-cond-local-opt-finite-dim,hettich1978_SIP-opt-cond-appl,hettich1995_sec-ord-opt-cond-gen-SIP,nuernberger1985_global-unicity-opt-approx,nuernberger1985_global-unicity-SIP,still1999_gen-SIP-thy-methd}. For reviews of the theory and methods for SIP, see \cite{hettich1993_SIP-thy-methd-appl,lopez2007,reemtsen1998_num-methd-SIP-survey}.    
 
Numerical methods for solving convex SIP problems include primal methods \cite{wang2015_feasible-methd-SIP}, dual methods \cite{hettich1993_SIP-thy-methd-appl}, penalty methods \cite{lin2014_new-exact-penalty-methd-SIP,yang2015_opt-cond-SIP-lp-exact-penalty-func}, smooth approximation and projection methods \cite{xu2014_solving-SIP-smooth-proj-grad-methd}, and cutting-plane methods \cite{kortanek1993}. Primal methods are based on searching for feasible descent directions, while dual methods are based on finding a solution of the system of KKT optimality conditions. In a penalty method, constraints are penalized in the objective and the penalty term is an integral of the constraint function over the continuous parameter. In a smooth approximation and projection method, infinitely many functions are replaced by a integral entropy function as an approximation. 
The SIP is solved by the smoothing projected gradient method. In a cutting-plane algorithm, a typical iteration involves solving a master problem with finitely many constraints and adding violated constraints obtained from solving a separation problem. Mehrotra and Papp \cite{mehrotra2015} developed a central-cutting-surface algorithm with a linear rate of convergence for solving convex SIP problems. They demonstrate that adding cutting surfaces, as compared to cutting planes, can be computationally effective for problems in high dimension and ensure greater stability in the algorithm's performance. The central-cutting surface method
is related to the exchange method \cite{hettich1993_SIP-thy-methd-appl}, however it uses a centrality parameter in the algorithm.

\section{Reformulation of the Wasserstein-robust Optimization Problem}\label{sec:reform}
In this section we show that \eqref{opt:DRO} is equivalent to a semi-infinite program. This semi-infinite program decomposes in scenarios. We note that the definition of $\W(P, P_0)\le r_0$ involves infinitely many equality constraints of the form $K(A\times \Omega) = P,\ \forall A \in \mathcal{F}$. Since this form is not suitable for dualization of the inner problem, Theorem~\ref{thm:problem_equivalence} below reformulates the inner problem in (\ref{opt:DRO}) as a conic linear program with finitely many constraints. Proposition~\ref{lem:fP_continuity_over_P} provides an intermediate result, and shows the continuity of the objective function of the inner problem in (\ref{opt:DRO}) with respect to the probability measure. Lemma~\ref{lem:P_converge} is a technical result that generalizes similar convergent sequence existence results for the finite dimensional sets to a set defined by the Wasserstein metric. We make the following general assumption throughout this paper. Additional assumptions are made at appropriate places as results are developed. Note that results in this section make no assumption on the metric $d(\cdot,\cdot)$ used in defining \eqref{def:Kantorovich_metric}.  
\begin{assumption}
\label{ass:theta_Xi_compact_h_bounded}
We assume that $r_0>0,\;\forall \theta\in\Theta$ in \eqref{def:Kantorovich_metric}. The feasible region $\Theta\subseteq\mathbb{R}^n$ and the domain $\Xi$ are compact. The function $h(\cdot,\cdot)$ is bounded on $\Theta\times\Xi$, and for every $\theta\in\Theta$, there exists $C(\theta)>0$ such that the function $h(\theta,\cdot)$ satisfies $\lvert h(\theta,s_1)-h(\theta,s_2) \rvert\le C(\theta) d(s_1,s_2),\;\forall s_1,s_2\in\Xi$. 
\end{assumption} 

\begin{definition}
Let $(\mathcal{X},d_{\mathcal{X}})$ be a metric space formed by defining a metric $d_X$  on the topological space $\mathcal{X}$. A function $f:\;\mathcal{X}\longmapsto\mathbb{R}$ is continuous in $(\mathcal{X},d_{\mathcal{X}})$ if for a
given $x\in \mathcal{X}$, and $\varepsilon>0$, $\exists\delta>0$ such that $\forall x^{\prime}\in \mathcal{X}$, 
$d_{\mathcal{X}}(x^{\prime},x)<\delta$ we have $|f(x^{\prime})-f(x)|<\varepsilon$. 
\end{definition}

Lemma~\ref{lem:P_converge} below shows that the interior of the Wasserstein ball has a sequence of distributions that converge to a chosen point
on the boundary of the ball. We need the following two results, proved in Appendix~\ref{appendix:fcontP}, in the proof of \newline Lemma~\ref{lem:P_converge}.
\begin{proposition}
\label{lem:fP_continuity_over_P}
Let Assumption~\ref{ass:theta_Xi_compact_h_bounded} hold, then the function $f(\theta,\cdot):$ \newline
$\mathcal{M}(\Xi,\mathcal{F})\longmapsto\mathbb{R}$ defined by $f(\theta,P):=\mathbb{E}_{P}[h(\theta,\xi)]$ is 
 continuous in $(\mathcal{M}(\Xi,\mathcal{F}),\W)$.
\end{proposition}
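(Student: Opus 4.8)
The plan is to prove the stronger statement that $f(\theta,\cdot)$ is Lipschitz continuous on $(\mathcal{M}(\Xi,\mathcal{F}),\W)$ with Lipschitz constant $C(\theta)$; continuity in the sense of the Definition then follows at once by taking $\delta=\varepsilon/C(\theta)$. The central idea is to use the primal (coupling) definition of the Wasserstein metric in \eqref{def:Kantorovich_metric} directly, so that no dual characterization of $\W$ is needed. Fix $\theta\in\Theta$ and pick two measures $P_1,P_2\in\mathcal{M}(\Xi,\mathcal{F})$.

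First I would rewrite the difference $f(\theta,P_1)-f(\theta,P_2)$ as a single integral against an arbitrary coupling. For any $K\in\mathcal{S}(P_1,P_2)$, the marginal conditions $K(A\times\Xi)=P_1(A)$ and $K(\Xi\times A)=P_2(A)$ give $\int_\Xi h(\theta,s_1)\,P_1(ds_1)=\int_{\Xi\times\Xi} h(\theta,s_1)\,K(ds_1\times ds_2)$ and the analogous identity for $P_2$, so that
\[
f(\theta,P_1)-f(\theta,P_2)=\int_{\Xi\times\Xi}\big[\,h(\theta,s_1)-h(\theta,s_2)\,\big]\,K(ds_1\times ds_2).
\]
Here I would note that $h(\theta,\cdot)$ is measurable (being Lipschitz, hence continuous) and bounded on the compact set $\Xi$ by Assumption~\ref{ass:theta_Xi_compact_h_bounded}, so both integrals are finite and this manipulation is justified.

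Next I would apply the Lipschitz hypothesis. Bounding the absolute value of the integral by the integral of the absolute value and then using $\lvert h(\theta,s_1)-h(\theta,s_2)\rvert\le C(\theta)\,d(s_1,s_2)$ yields
\[
\lvert f(\theta,P_1)-f(\theta,P_2)\rvert\le C(\theta)\int_{\Xi\times\Xi} d(s_1,s_2)\,K(ds_1\times ds_2).
\]
Since $K\in\mathcal{S}(P_1,P_2)$ was arbitrary, taking the infimum over all such couplings and recognizing the right-hand side as $C(\theta)\,\W(P_1,P_2)$ delivers the Lipschitz estimate $\lvert f(\theta,P_1)-f(\theta,P_2)\rvert\le C(\theta)\,\W(P_1,P_2)$, which is exactly what is needed.

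The argument is short, and the only point requiring genuine care is the marginal identity that rewrites the difference of the two expectations as one integral over $\Xi\times\Xi$; this is where the coupling structure enters and where measurability of $(s_1,s_2)\mapsto h(\theta,s_1)$ and $(s_1,s_2)\mapsto h(\theta,s_2)$ with respect to $\mathcal{F}\times\mathcal{F}$ must be invoked. I expect this to be the main (and only minor) obstacle, together with confirming that $d$ is integrable against every coupling, which is immediate since $d$ is measurable and, being continuous on the compact set $\Xi\times\Xi$, bounded. I would emphasize that only the ``easy'' direction of the coupling bound is used, so no appeal to the full Kantorovich--Rubinstein duality is required.
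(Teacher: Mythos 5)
Your proposal is correct and follows essentially the same route as the paper's proof in Appendix~\ref{appendix1}: both rewrite the two expectations as integrals over $\Xi\times\Xi$ via the marginal conditions of a coupling $K\in\mathcal{S}(P_1,P_2)$ and then apply the bound $\lvert h(\theta,s_1)-h(\theta,s_2)\rvert\le C(\theta)\,d(s_1,s_2)$. The only cosmetic difference is that you take the infimum over all couplings at the end to state the Lipschitz estimate explicitly, whereas the paper selects a near-optimal coupling with transport cost at most $\varepsilon$ up front; the content is identical.
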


\begin{proposition}
\label{prop:int_d_K_zero}
For any probability measure $P\in \mathcal{M}(\Omega,\mathcal{F})$, there exists a $K\in \mathcal{M}(\Xi\times\Xi,\mathcal{F}\times\mathcal{F})$ such that $K(A\times A)=P(A)$ and $K(\Xi\times A)=K(A\times\Xi)=P(A)$ for any $A\in \mathcal{F}$. For such a joint probability measure $K$, we have
\begin{equation}
\int_{(s_1\times s_2)\in\Xi\times\Xi}d\big(s_1,s_2\big) K(ds_1\times ds_2)=0. 
\end{equation} 
\end{proposition}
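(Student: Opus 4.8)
The plan is to exhibit the \emph{diagonal coupling} of $P$ with itself and then argue that every $K$ meeting the stated conditions is in fact concentrated on the diagonal. For existence, I would take the diagonal embedding $\Phi:\Xi\longmapsto\Xi\times\Xi$, $\Phi(s)=(s,s)$, and define $K:=P\circ\Phi^{-1}$. Because the measurable rectangles generate $\mathcal{F}\times\mathcal{F}$ and $\Phi^{-1}(A\times B)=A\cap B\in\mathcal{F}$, the map $\Phi$ is measurable (by the usual generating-class argument) and $K$ is a well-defined probability measure on $(\Xi\times\Xi,\mathcal{F}\times\mathcal{F})$. The identities $K(A\times A)=K(\Xi\times A)=K(A\times\Xi)=P(A)$ then follow from $\Phi^{-1}(A\times A)=\Phi^{-1}(\Xi\times A)=\Phi^{-1}(A\times\Xi)=A$, which settles the existence claim.

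To evaluate the integral for an arbitrary $K$ satisfying the hypotheses, I would first extract the key consequence $K\big(A\times(\Xi\setminus A)\big)=K(A\times\Xi)-K(A\times A)=P(A)-P(A)=0$ for every $A\in\mathcal{F}$; that is, $K$ assigns zero mass to every off-diagonal rectangle of the form $A\times A^{c}$. The goal is then to upgrade this to $K(\Delta^{c})=0$, where $\Delta:=\{(s_1,s_2):s_1=s_2\}=d^{-1}(\{0\})$ is the diagonal, which is $\mathcal{F}\times\mathcal{F}$-measurable by the standing hypothesis that $d$ is measurable. Since $\Xi$ is compact it is separable; fixing a countable dense set $\{x_k\}$, I would cover $\Delta^{c}=\bigcup_{q\in\mathbb{Q}_{>0}}\{d>q\}$ by the countably many rectangles $B(x_k,q/2)\times B(x_k,q/2)^{c}$. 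Indeed, if $d(s_1,s_2)>q$ and $d(s_1,x_k)<q/2$, then the triangle inequality forces $s_2\notin B(x_k,q/2)$, so each point of $\{d>q\}$ lies in one of these rectangles; each rectangle is $K$-null by the previous step, and countable subadditivity yields $K(\Delta^{c})=0$.

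Finally, since $d$ is a metric, $d(s_1,s_2)=0$ precisely on $\Delta$, so the integrand vanishes $K$-almost everywhere and
\[
\int_{\Xi\times\Xi}d(s_1,s_2)\,K(ds_1\times ds_2)=\int_{\Delta^{c}}d(s_1,s_2)\,K(ds_1\times ds_2)=0,
\]
as claimed. For the diagonal coupling specifically one may bypass the covering step and simply invoke the change-of-variables formula, $\int d\,d(P\circ\Phi^{-1})=\int_{\Xi} d(s,s)\,P(ds)=0$.

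I expect the only real obstacle to be the covering step in the second paragraph: translating the purely set-algebraic vanishing $K(A\times A^{c})=0$ into the statement that $K$ concentrates on the diagonal requires separability of $\Xi$ together with the fact that the balls $B(x_k,q/2)$ lie in $\mathcal{F}$. The latter is guaranteed here, since measurability of $d$ makes each section $s\mapsto d(s,x_k)$ measurable and hence the sublevel sets $B(x_k,\rho)$ belong to $\mathcal{F}$. The remaining ingredients---measurability of $\Phi$ and the metric identity $d(s,s)=0$---are routine.
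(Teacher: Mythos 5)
Your proof is correct, and it reaches the conclusion by a genuinely different route in its main step. For existence, your pushforward $K=P\circ\Phi^{-1}$ under the diagonal map is exactly the paper's measure $K(A\times B)=P(A\cap B)$, packaged more cleanly (countable additivity comes for free rather than being left to the reader to ``verify''). For the vanishing of the integral, the paper argues by contradiction \emph{for that specific $K$}: it assumes the integral is positive, finds $m$ with $K(\{d\ge 1/m\})>0$, writes $\{d\ge 1/m\}$ as a countable union of measurable rectangles $S^1_i\times S^2_i$, and uses the explicit formula $K(S^1_i\times S^2_i)=P(S^1_i\cap S^2_i)$ to produce a point $s$ with $d(s,s)\ge 1/m$. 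Your argument instead extracts only the hypothesis-level identity $K(A\times A^c)=0$ and covers the off-diagonal by countably many such rectangles via a dense sequence, so it applies to \emph{every} $K$ satisfying the stated conditions --- which is what the proposition literally asserts --- and it avoids the paper's claim that an arbitrary set in $\mathcal{F}\times\mathcal{F}$ is a countable union of rectangles, a claim that is false for general product $\sigma$-algebras (only the generated algebra consists of finite unions of rectangles); your covering of $\{d>q\}$ by the sets $B(x_k,q/2)\times B(x_k,q/2)^c$ is the standard repair of that step. Two caveats: (i) your covering needs $(\Xi,d)$ to be separable, whereas the compactness in the standing assumption is with respect to the Euclidean topology of $\mathbb{R}^n$ and the paper explicitly imposes nothing on $d$ beyond product-measurability, so separability of $(\Xi,d)$ is a (mild, in practice always satisfied) extra hypothesis you are smuggling in; (ii) for the only place the proposition is invoked --- Lemma 3.4, applied to the explicitly constructed couplings --- your one-line change-of-variables identity $\int d\,\mathrm{d}(P\circ\Phi^{-1})=\int_{\Xi}d(s,s)\,P(ds)=0$ already suffices and sidesteps both the covering and the separability issue entirely.
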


\begin{lemma}
\label{lem:P_converge}
Let Assumption~\ref{ass:theta_Xi_compact_h_bounded} hold, and $\mathcal{P}$ be defined as in \emph{(\ref{def:ambiguity_set})}. Let $\mathcal{P}^{\prime}:=\{P\in\mathcal{M}(\Xi,\mathcal{F}):\; \W(P, P_0)<r_0 \}$ be the interior of $\mathcal{P}$. Then for any $P\in\mathcal{P}$, there exists a sequence $\{ P^n\}^{\infty}_{n=1}\subseteq \mathcal{P}^{\prime}$ such that $\lim_{n\to\infty}\;\W(P^n,P)=0$.
\end{lemma}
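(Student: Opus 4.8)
The plan is to approximate any $P\in\mathcal{P}$ lying on the boundary of the Wasserstein ball by pushing it slightly toward the center $P_0$, using convex interpolation. When $\W(P,P_0)<r_0$ the constant sequence $P^n\equiv P$ already works, so the only genuinely interesting case is $\W(P,P_0)=r_0$; nevertheless a single uniform construction covers both. For any $P\in\mathcal{P}$ I would set
\[
P^n:=\Big(1-\tfrac1n\Big)P+\tfrac1n P_0,\qquad n\ge 1,
\]
which is a legitimate element of $\mathcal{M}(\Xi,\mathcal{F})$ as a convex combination of probability measures, and then verify the two required properties: that each $P^n$ lies in the open ball $\mathcal{P}^{\prime}$, and that $\W(P^n,P)\to 0$.

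The single analytic fact driving both properties is the joint convexity of the map $(Q_1,Q_2)\mapsto \W(Q_1,Q_2)$. I would prove this directly from the coupling definition \eqref{def:Kantorovich_metric}: given couplings $K_1\in\mathcal{S}(Q_1,R_1)$ and $K_2\in\mathcal{S}(Q_2,R_2)$, the convex combination $(1-\lambda)K_1+\lambda K_2$ has marginals $(1-\lambda)Q_1+\lambda Q_2$ and $(1-\lambda)R_1+\lambda R_2$, hence is feasible for the corresponding coupling problem, and its transport cost is the same convex combination of the costs of $K_1$ and $K_2$; taking $\varepsilon$-optimal $K_1,K_2$ and letting $\varepsilon\downarrow 0$ yields
\[
\W\big((1-\lambda)Q_1+\lambda Q_2,\,(1-\lambda)R_1+\lambda R_2\big)\le (1-\lambda)\W(Q_1,R_1)+\lambda\W(Q_2,R_2).
\]
Applying this together with the diagonal fact $\W(P_0,P_0)=\W(P,P)=0$ supplied by Proposition~\ref{prop:int_d_K_zero}, I would obtain on the one hand
\[
\W(P^n,P_0)\le\Big(1-\tfrac1n\Big)\W(P,P_0)+\tfrac1n\W(P_0,P_0)\le\Big(1-\tfrac1n\Big)r_0<r_0,
\]
so $P^n\in\mathcal{P}^{\prime}$, and on the other hand
\[
\W(P^n,P)\le\Big(1-\tfrac1n\Big)\W(P,P)+\tfrac1n\W(P_0,P)\le\tfrac1n\, r_0\longrightarrow 0,
\]
where I used $\W(P,P_0)\le r_0$ because $P\in\mathcal{P}$. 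This establishes both claims simultaneously.

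The main obstacle is the verification of joint convexity, and in particular the measure-theoretic bookkeeping that a convex combination of two admissible couplings is again admissible and integrates the cost $d(\cdot,\cdot)$ additively; everything else reduces to substituting the diagonal value $\W(P,P)=0$ from Proposition~\ref{prop:int_d_K_zero}. I note that this argument controls $\W(P^n,P)$ directly, so it does not appear to require the continuity result of Proposition~\ref{lem:fP_continuity_over_P}; that result would instead be needed if one preferred to build $P^n$ converging only weakly and then transfer the conclusion to the objective, or when this lemma is subsequently \emph{applied} rather than proved. A final routine check is that the infimum in \eqref{def:Kantorovich_metric} is finite, which holds since $\Xi$ is compact and $d$ is therefore bounded, so that all the inequalities above involve only finite quantities.
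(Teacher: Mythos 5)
Your proposal is correct and follows essentially the same route as the paper: the sequence $P^n=\lambda_n P+(1-\lambda_n)P_0$, convex combinations of couplings (an $\varepsilon$-optimal coupling of $P$ with $P_0$ together with the zero-cost diagonal couplings from Proposition~\ref{prop:int_d_K_zero}) to bound $\W(P^n,P_0)$ and $\W(P^n,P)$. Your packaging of the coupling computation as a joint-convexity inequality for $\W$, with $\varepsilon\downarrow 0$ taken inside that lemma, is a tidy reorganization that avoids the paper's explicit choice of $\varepsilon$ as a function of $\lambda_n$, but the underlying argument is identical.
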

\begin{proof}
For any given $P\in\mathcal{P}$ and for any $\varepsilon>0$, by the definition of the Wasserstein metric,  there exists $K^{\varepsilon} \in \mathcal{M}(\Xi\times\Xi, \mathcal{F}\times \mathcal{F})$ such that $K^{\varepsilon}(\Xi\times A)=P_0(A)$, $K^{\varepsilon}(A\times\Xi)=P(A)$,\;$\forall A\in\mathcal{F}$, and 
\begin{equation}\label{eqn:prop_Kvare_le_e_vare}
\int_{(s_1\times s_2)\in\Xi\times\Xi}d(s_1,s_2)\; K^{\varepsilon}(ds_1\times ds_2) \le \W\big( P_0, P \big) +\varepsilon.
\end{equation}  
Define $K_0\in \mathcal{M}(\Xi\times\Xi,\mathcal{F}\times \mathcal{F})$ such that $K_0(A\times B)=P_0(A\cap B),\;\forall A,B\in \mathcal{F}$. By Proposition~\ref{prop:int_d_K_zero}, we have 
\begin{equation}\label{eqn:prop_int _d_K0_=zero}
\int_{(s_1\times s_2)\in\Xi\times\Xi}d(s_1,s_2) K_0(ds_1\times ds_2)=0.
\end{equation} 	
Now define $\{P^n \}^{\infty}_{n=1}\subseteq \mathcal{M}(\Xi,\mathcal{F})$ as: $P^n:=\lambda_n P + (1-\lambda_n) P_0$ with $\lambda_n\in(0,1)$ and $\lambda_n\to 1$. Define $K^{\varepsilon}_n:=\lambda_n K^{\varepsilon} + (1-\lambda_n) K_0$ as a probability measure in $\mathcal{M}(\Xi\times\Xi, \mathcal{F}\times\mathcal{F})$. It is straightforward to verify that $K^{\varepsilon}_n(\Xi\times A)=P(A)$ and $K^{\varepsilon}_n(A\times\Xi)=P^n(A)$,\;$\forall A\in\mathcal{F}$, using their definitions, which means the joint measure $K^{\varepsilon}_n$ satisfies marginal conditions with respect to $P$ and $P^n$.
First, we need to verify that $\big\{ P^n \big\}^{\infty}_{n=1}\subseteq \mathcal{P}^{\prime}$. To see this, we have
\begin{align}\label{eqn:W<lambdan-W}
&\W\big(P^n,P_0\big)\le \int_{(s_1\times s_2)\in\Xi\times\Xi} d(s_1,s_2)\;K^{\varepsilon}_n(ds_1\times ds_2) \nonumber\\
&= \lambda_n \int_{\Xi\times\Xi} d(s_1,s_2) \;K^{\varepsilon}(ds_1\times ds_2) + (1-\lambda_n) \int_{\Xi\times\Xi} d(s_1,s_2)\; K_0(ds_1\times ds_2) \nonumber\\
&\le \lambda_n\big[ \W\big(P, P_0 \big) + \varepsilon\big]. \hspace{1.5cm}\textrm{using (\ref{eqn:prop_Kvare_le_e_vare}-\ref{eqn:prop_int _d_K0_=zero})} 
\end{align}		
Since $\varepsilon$ can be chosen arbitrarily, we set 
\begin{equation}\label{eqn:epsilon-lambda-W}
\varepsilon = \textrm{min}\Bigg\{1, \; \frac{1}{2}\left( \frac{1}{\lambda_n} -1\right) \W\big(P, P_0 \big) \Bigg\}.
\end{equation}
Substituting (\ref{eqn:epsilon-lambda-W}) into (\ref{eqn:W<lambdan-W}) yields $\W\big(P^n,P_0\big)\le (1/2 + \lambda_n/2)  \W\big(P, P_0 \big) < r_0$, hence $\big\{ P^n \big\}^{\infty}_{n=1} \subseteq \mathcal{P}^{\prime}$.

It remains to verify that $\lim_{n\to\infty} \W(P^n,P)=0$. To see this, define $K\in\mathcal{M}(\Xi\times\Xi, \mathcal{F}\times\mathcal{F})$ such that $K(A\times B)=P(A\cap B),\;\forall A,B\in\mathcal{F}$. By Proposition~\ref{prop:int_d_K_zero}, we have 
\begin{equation}\label{eqn:int_K=0}
\int_{(s_1\times s_2)\in\Xi\times\Xi}d(s_1,s_2) K(ds_1\times ds_2)=0.
\end{equation}
Let $\widetilde{K}^{\varepsilon}_n := \lambda_n K + (1-\lambda_n) K^{\varepsilon}$ be a joint probability measure in $\mathcal{M}(\Xi\times\Xi, \mathcal{F}\times\mathcal{F})$. Then we have $\widetilde{K}^{\varepsilon}(\Xi\times A)=P(A)$ and $\widetilde{K}^{\varepsilon}(A\times \Xi)=P^n(A),\;\forall A\in\mathcal{F}$, which means that $\widetilde{K}^{\varepsilon}$ satisfies marginal conditions with respect to $P$ and $P^n$. It follows that
\begin{equation}
\begin{aligned}
&\W\big(P^n,P\big) \le \int_{(s_1\times s_2)\in\Xi\times\Xi} d(s_1,s_2)\;\widetilde{K}^{\varepsilon}_n(ds_1\times ds_2) \\
&= \lambda_n \int_{\Xi\times\Xi} d(s_1,s_2)\;K(ds_1\times ds_2)  + (1-\lambda_n) \int_{\Xi\times\Xi} d(s_1,s_2) \; K^{\varepsilon}(ds_1\times d s_2) \\
&\le (1-\lambda_n) \big[  \W\big( P, P_0 \big) + \varepsilon \big]. \hspace{1.5cm} \textrm{using (\ref{eqn:prop_Kvare_le_e_vare}),(\ref{eqn:int_K=0})}
\end{aligned}
\end{equation}	
Since $\lambda_n\to 1$, we have $\W\big(P^n, P\big) \to 0$ as $n\to \infty$.
\end{proof}

\begin{theorem}
\label{thm:problem_equivalence}
Let Assumption~\ref{ass:theta_Xi_compact_h_bounded} hold, and $\mathcal{P}$ be defined as in \emph{(\ref{def:ambiguity_set})}. For a given $\theta$,  the inner problem $\underset{P\in\mathcal{P}}{\emph{sup}}\; \mathbb{E}_P[h(\theta,\xi)]$ 
has a finite optimal value, and it is equivalent to the following  conic linear program \emph{(CLP):}
\begin{equation}
{
\begin{array}{cll}
 \underset{\mu}{\emph{sup}} &  \displaystyle \int_{s\in\Xi} h(\theta,s) \mu(ds\times\Xi) &  \\ 
\emph{s.t.} & \displaystyle \mu(\Xi\times\Xi^i)  = 1/m, &  i\in [m]  \\
& \displaystyle \mu(\Xi\times\Xi^{m+1}) = 0, \\
& \displaystyle \sum_{i\in[m]} \int_{s\in\Xi}  d(s,s^i) \mu(ds\times\Xi^i)\le r_0, & \\
& \displaystyle \mu \succeq 0, &   
\end{array}
}
\tag{CLP} \label{opt:JDMP}
\end{equation}   
where $\mu\succeq 0$ denotes that $\mu$ is a positive measure.
\end{theorem}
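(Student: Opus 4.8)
The plan is to establish the equivalence by exhibiting a value-preserving correspondence between feasible distributions $P$ of the inner problem and feasible measures $\mu$ of \eqref{opt:JDMP}, and to handle the two inequalities between the optimal values separately. Finiteness is immediate: since $h(\theta,\cdot)$ is bounded on the compact set $\Xi$ by Assumption~\ref{ass:theta_Xi_compact_h_bounded}, the quantity $\mathbb{E}_P[h(\theta,\xi)]$ is uniformly bounded over all $P\in\mathcal{M}(\Xi,\mathcal{F})$, so both suprema are finite; the work is to show they coincide.

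First I would prove that the inner supremum does not exceed the \eqref{opt:JDMP} value. Fix $P$ with $\W(P,P_0)<r_0$. By the definition \eqref{def:Kantorovich_metric} of the Wasserstein metric as an infimum, for every $\varepsilon\in(0,r_0-\W(P,P_0))$ there is a coupling $K^{\varepsilon}\in\mathcal{S}(P,P_0)$ (first marginal $P$, second marginal $P_0$) with $\int_{\Xi\times\Xi}d(s_1,s_2)\,K^{\varepsilon}(ds_1\times ds_2)\le \W(P,P_0)+\varepsilon<r_0$. Setting $\mu:=K^{\varepsilon}$, the marginal condition on $P_0$ decomposes across the partition $\Xi=\cup_i\Xi^i$ into $\mu(\Xi\times\Xi^i)=1/m$ and $\mu(\Xi\times\Xi^{m+1})=0$, and because $s_2=\xi^i$ on $\Xi^i$ the transport cost rewrites as $\sum_{i\in[m]}\int_{\Xi}d(s,\xi^i)\,\mu(ds\times\Xi^i)\le r_0$; hence $\mu$ is feasible for \eqref{opt:JDMP}, with objective $\int_\Xi h(\theta,s)\,\mu(ds\times\Xi)=\mathbb{E}_P[h(\theta,\xi)]$. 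This yields $\mathbb{E}_P[h(\theta,\xi)]\le\text{opt}\eqref{opt:JDMP}$ for every interior $P$. For a boundary $P$ with $\W(P,P_0)=r_0$, I would invoke Lemma~\ref{lem:P_converge} to obtain a sequence $\{P^n\}\subseteq\mathcal{P}^{\prime}$ with $\W(P^n,P)\to 0$, apply the interior bound to each $P^n$, and pass to the limit using the continuity of $P\mapsto\mathbb{E}_P[h(\theta,\xi)]$ established in Proposition~\ref{lem:fP_continuity_over_P}; this extends the bound to boundary points, so the inner supremum is at most $\text{opt}\eqref{opt:JDMP}$.

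For the reverse inequality I would start from an arbitrary feasible $\mu$ of \eqref{opt:JDMP} and recover a feasible distribution from its first marginal. Summing the marginal constraints gives total mass $\mu(\Xi\times\Xi)=\sum_{i\in[m]}(1/m)+0=1$, so $\mu$ is a probability measure and $P:=\mu(\cdot\times\Xi)$ is a probability distribution on $\Xi$. The constraints $\mu(\Xi\times\Xi^i)=1/m$ and $\mu(\Xi\times\Xi^{m+1})=0$ identify the second marginal of $\mu$ as $P_0$, so $\mu\in\mathcal{S}(P,P_0)$; reversing the decomposition above gives $\int_{\Xi\times\Xi}d(s_1,s_2)\,\mu(ds_1\times ds_2)=\sum_{i\in[m]}\int_\Xi d(s,\xi^i)\,\mu(ds\times\Xi^i)\le r_0$. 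By the infimum definition of the metric, $\W(P,P_0)\le\int_{\Xi\times\Xi}d(s_1,s_2)\,\mu(ds_1\times ds_2)\le r_0$, so $P\in\mathcal{P}$ and $\mathbb{E}_P[h(\theta,\xi)]=\int_\Xi h(\theta,s)\,\mu(ds\times\Xi)$ is dominated by the inner supremum. Taking the supremum over $\mu$ gives $\text{opt}\eqref{opt:JDMP}\le$ the inner value, which together with the first part closes the equivalence.

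I expect the delicate point to be the boundary case $\W(P,P_0)=r_0$: since no regularity of $d$ is assumed, I cannot rely on attainment of the Wasserstein infimum by a single coupling of cost exactly $r_0$, so the argument must route through the interior approximation of Lemma~\ref{lem:P_converge} together with the objective continuity of Proposition~\ref{lem:fP_continuity_over_P} --- which is precisely why those two results are proved beforehand. The remaining steps (the marginal decomposition, the total-mass computation, and checking that $\mu$ is a genuine coupling of $P$ and $P_0$) are routine bookkeeping over the partition $\{\Xi^i\}$.
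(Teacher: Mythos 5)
Your proposal is correct and uses essentially the same ingredients as the paper's proof: the coupling-to-measure and measure-to-marginal correspondences for the two inequalities, with the boundary case $\W(P,P_0)=r_0$ handled via the interior approximation of Lemma~\ref{lem:P_converge} and the continuity of Proposition~\ref{lem:fP_continuity_over_P}. The only difference is organizational --- the paper routes the argument through an explicit auxiliary problem over the open ball and proves a chain of three inequalities, whereas you fold that step into the first inequality directly --- but the mathematical content is identical.
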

\begin{proof}
The inner problem in \eqref{opt:DRO} can be written as:
\begin{equation}\label{opt:inner_problem_original}
\begin{array}{crclcl}
\underset{P\in \mathcal{M}(\Xi,\mathcal{F})}{\textrm{sup}} & \multicolumn{3}{l}{ \displaystyle \int_{s\in\Xi} h(\theta,s) P(ds)} \\ 
\textrm{s.t.} &\displaystyle   \W(P,P_0) \le r_0. & & 
\end{array}
\end{equation}
Let val(eqn\#) denote the optimal value of a problem given by (eqn\#). By Assumption~\ref{ass:theta_Xi_compact_h_bounded} $\Xi$ is compact, and
$h(\theta,\cdot)$ is bounded in $\Xi$. Hence, the objective $\mathbb{E}_P[h(\theta,\xi)]$ is finite, and therefore val(\ref{opt:inner_problem_original}) is finite. We first show that val(\ref{opt:inner_problem_original})$=$val(\ref{opt:JDMP}). For this purpose we consider the following auxiliary problem:
\begin{equation}\label{opt:inner_problem_alternative}
\begin{array}{crclcl}
\underset{P\in \mathcal{M}(\Xi,\mathcal{F})}{\textrm{sup}} & \multicolumn{3}{l}{\displaystyle \int_{s\in\Xi} h(\theta,s)\; P(ds)} \\ 
\textrm{s.t.} &\displaystyle   \W(P, P_0)  <  r_0, && 
\end{array}
\end{equation}
whose feasible set is the interior of $\mathcal{P}$.  We will prove that $\textrm{val(\ref{opt:inner_problem_original})} \ge \textrm{val(\ref{opt:JDMP})} \ge \textrm{val(\ref{opt:inner_problem_alternative})}$, and $\textrm{val(\ref{opt:inner_problem_original})} = \textrm{val(\ref{opt:inner_problem_alternative})}$, and hence val(\ref{opt:inner_problem_original})$=$val(\ref{opt:JDMP}).

We now show that $\textrm{val(\ref{opt:JDMP})}\ge \textrm{val(\ref{opt:inner_problem_alternative})}$. Let $\widetilde{P}$ be a feasible solution of (\ref{opt:inner_problem_alternative}). Since we have $\W(\widetilde{P},P_0)<r_0$, by the definition of the Wasserstein metric in (\ref{def:Kantorovich_metric}), there exists a $\widetilde{K}\in\mathcal{S}(\widetilde{P},P_0) $ satisfying: 
\begin{equation}
\int_{\Xi\times\Xi} d(s_1,s_2)\; \widetilde{K}(ds_1\times ds_2) = \sum_{i\in[m]} \int_{s\in\Xi} d(s,s^i)\; \widetilde{K}(ds\times\Xi^i) \le r_0. \nonumber
\end{equation}
Therefore, $\widetilde{K}$ is a feasible solution of (\ref{opt:JDMP}) with the objective value $\int_{s\in\Xi} h(\theta,s) \widetilde{K}(ds\times\Xi)$. 
Now observe that $\int_{s\in\Xi} h(\theta,s)\;\widetilde{K}(ds\times\Xi)=\int_{s\in\Xi} h(\theta,s)\; \widetilde{P}(ds)$, 
because $\widetilde{K}\in\mathcal{S}(\widetilde{P},P_0)$.
Consequently, for any sequence $\big\{ \widetilde{P}_k \big\}^{\infty}_{1}$ such that 
$\int_{s\in\Xi}h(\theta,s)\widetilde{P}_k(ds)\to\textrm{val}(\ref{opt:inner_problem_alternative})$, there exist $\big\{ \widetilde{K}_k \big\}^{\infty}_{1}$ satisfying 
$\int_{s\in\Xi} h(\theta,s) \; \widetilde{K}_k(ds\times\Xi)=\int_{s\in\Xi}h(\theta,s)\widetilde{P}_k(ds)$, hence 
$\int_{s\in\Xi} h(\theta,s) \; \widetilde{K}_k(ds\times\Xi)\to\textrm{val}(\ref{opt:inner_problem_alternative})$.
It follows that $\textrm{val}(\textrm{\ref{opt:JDMP}})\ge \textrm{val}(\ref{opt:inner_problem_alternative})$. 
		
Next we show that val(\ref{opt:inner_problem_original}) $\ge$ val(\ref{opt:JDMP}). Suppose $\widehat{K}$ is a feasible solution of (\ref{opt:JDMP}). 
Let $\widehat{P}\in\mathcal{M}(\Xi,\mathcal{F})$ be the marginal distribution of $\widehat{K}$ such that $\widehat{P}(A):=\widehat{K}(A\times\Xi),\;\forall A\in\mathcal{F}$. Due to the constraints of (\ref{opt:JDMP}), $\widehat{P}$ satisfies $\W(\widehat{P},P_0)\le r_0$; and hence $\widehat{P}$ is a feasible solution of (\ref{opt:inner_problem_original}). Because $\widehat{K}\in\mathcal{S}(\widehat{P},P_0)$, we have $\int_{s\in\Xi} h(\theta,s)\;\widehat{P}(ds)=\int_{s\in\Xi} h(\theta,s)\;\widehat{K}(ds\times\Xi)$. Consequently, for any sequence $\big\{\widehat{K}_k\big\}^{\infty}_{1}$ such that \newline 
$\int_{s\in\Xi} h(\theta,s)  \widehat{K}(ds\times\Xi)\to\textrm{val(\ref{opt:JDMP})}$, there exist a sequence $\big\{\widehat{P}_k\big\}^{\infty}_{1}$ satisfying \newline
$\int_{s\in\Xi} h(\theta,s)\;\widehat{P}_k(ds)=\int_{s\in\Xi} h(\theta,s)\;\widehat{K}_k(ds\times\Xi)$. It follows that $\int_{s\in\Xi} h(\theta,s)\;\widehat{P}_k(ds)\to\textrm{val(\ref{opt:JDMP})}$, and hence $\textrm{val}(\ref{opt:inner_problem_original})\ge \textrm{val}(\textrm{\ref{opt:JDMP}})$. 
		
We now show that $\textrm{val(\ref{opt:inner_problem_original})}=\textrm{val(\ref{opt:inner_problem_alternative})}$. Since $\textrm{val}(\ref{opt:inner_problem_original})$ is finite, there exists a sequence of probability measures $\big\{ P_m \big\}^{\infty}_{m=1} \subseteq \mathcal{P}$  such that 
\begin{equation}\label{eqn:lim_int_l_P=val}
\lim_{m\to \infty} \textrm{ } \int_{s\in\Xi} h(\theta,s) \;P_m(ds) = \textrm{val(\ref{opt:inner_problem_original})},
\end{equation}		
where $\mathcal{P}:= \big\{  P\in \mathcal{M}(\Xi,\mathcal{F}): \W(P, P_0) \le r_0  \big\}$. Let
$\mathcal{P}^{\prime} := \big\{ P\in \mathcal{M}(\Xi,\mathcal{F}): \W(P, P_0) < r_0 \big\}$ be the feasible set of (\ref{opt:inner_problem_alternative}). By Lemma~\ref{lem:P_converge}, for any $P_m$ ($m\ge1$), there exists a sequence $\{P^n_m\}^{\infty}_{n=1} \subseteq \mathcal{P}^{\prime}$ such that $\lim_{n\to\infty}\W(P^n_m,P_m)=0$. Let $f(\theta,P):=\mathbb{E}_{P}[h(\theta,\xi)]$, then we have that
$\textrm{val(\ref{opt:inner_problem_original})}\ge \textrm{val(\ref{opt:inner_problem_alternative})}\ge f(\theta,P^n_m),\; \forall m,n\ge 1$.
Moreover, we also have that
\begin{equation}
\begin{aligned}
&\lim_{m\to\infty}\lim_{n\to\infty} |f(\theta,P^n_m)-\textrm{val}(\ref{opt:inner_problem_original})| \\
&\le \lim_{m\to\infty}\lim_{n\to\infty} \Big( \lvert f(\theta,P^n_m)- f(\theta,P_m) \rvert + |f(\theta, P_m)-\textrm{val}(\ref{opt:inner_problem_original})| \Big)  \\
&=0,  \hspace{1cm}\textrm{[using Proposition~\ref{lem:fP_continuity_over_P} and (\ref{eqn:lim_int_l_P=val}) ]}
\end{aligned}
\nonumber
\end{equation}
which implies that val(\ref{opt:inner_problem_alternative})$=$val(\ref{opt:inner_problem_original}).
\end{proof}
Note that the infimum used in (\ref{opt:JDMP}) is over the joint measure $\mu$. Moreover, in (\ref{opt:JDMP}) both the objective and constraints are in linear form of the measure $\mu$, which is a `new decision variable' in the inner problem. Theorem~\ref{thm:inner-problem-dualization} provides a dual of (\ref{opt:JDMP}) as a semi-infinite linear program in the linear space of signed measures using conic programming duality. This theorem shows that the dual problem has a formulation which decomposes in $\xi^i$, $i\in[m]$.

\begin{theorem}
\label{thm:inner-problem-dualization}
Let Assumption~\ref{ass:theta_Xi_compact_h_bounded} hold. The dual of \emph{(\ref{opt:JDMP})} can be written as the following  semi-infinite program:
\begin{equation}
\begin{array}{cll}
 \underset{v\in \mathbb{R}^{m+1}}{\emph{min}} & \displaystyle \frac{1}{m}\sum^m_{i=1} v_i + r_0\cdot v_{m+1} & \\ 
\emph{s.t.} &\displaystyle h(\theta,s) - v_i - v_{m+1}\cdot d(s,\xi^i)\le 0, & s\in\Xi, i\in[m]  \\    
& v_1,\ldots, v_m\in \mathbb{R}, \quad v_{m+1} \ge 0. &     
\end{array}
\tag{CLP-D}\label{opt:CSIP}
\end{equation}
Furthermore, strong duality holds, i.e., \emph{val(\ref{opt:JDMP})}=\emph{val(\ref{opt:CSIP})}. 
Additionally, \newline for $r_0>0$ the optimum solution of (\ref{opt:CSIP}) can be bounded by the following polytope:
\begin{equation}
\begin{aligned}
\mathcal{H}:=\Big\{v\in\mathbb{R}^{m+1}:\;&C_1 \le v_i\le (m+1)C_2 - m C_1,\textrm{ for }i\in[m], \\
 & 0\le v_{m+1}\le (C_2-C_1)/r_0 \Big\},
 \end{aligned}
\end{equation}
where $C_1$ and $C_2$ are lower and upper bounds of $h(\cdot,\cdot)$ on $\Theta\times\Xi$, respectively.
\end{theorem}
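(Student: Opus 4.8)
The plan is to treat \eqref{opt:JDMP} as a conic linear program over the cone of nonnegative Borel measures on the compact product space $\Xi\times\Xi$, paired with the space $C(\Xi\times\Xi)$ of continuous functions through the integration bracket $\langle\mu,g\rangle=\int g\,d\mu$ (Riesz representation). First I would write the constraints of \eqref{opt:JDMP} as a single affine map sending $\mu$ to the vector with entries $\mu(\Xi\times\Xi^i)$ for $i\in[m]$, $\mu(\Xi\times\Xi^{m+1})$, and $\sum_{i\in[m]}\int d(s,\xi^i)\mu(ds\times\Xi^i)$; attach free multipliers $v_1,\dots,v_m$ to the equalities $\mu(\Xi\times\Xi^i)=1/m$, a free multiplier $v_0$ to $\mu(\Xi\times\Xi^{m+1})=0$, and a nonnegative multiplier $v_{m+1}\ge 0$ to the transport inequality, then form the Lagrangian. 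Collecting the coefficient of $\mu$ shows $\sup_{\mu\succeq 0}L$ is finite only if the integrand $h(\theta,s_1)-\sum_{i\in[m]}v_i\mathbf{1}[s_2=\xi^i]-v_0\mathbf{1}[s_2\in\Xi^{m+1}]-v_{m+1}\sum_{i\in[m]}\mathbf{1}[s_2=\xi^i]\,d(s_1,\xi^i)$ is nonpositive on $\Xi\times\Xi$. Splitting on the value of $s_2$ produces the family $h(\theta,s)-v_i-v_{m+1}d(s,\xi^i)\le 0$ (when $s_2=\xi^i$) and $h(\theta,s)\le v_0$ (when $s_2\in\Xi^{m+1}$). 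Since the objective coefficient of $v_0$ is its right-hand side $0$, the variable $v_0$ never enters the dual objective and its only constraint $v_0\ge\sup_s h(\theta,s)$ is satisfiable (finite by Assumption~\ref{ass:theta_Xi_compact_h_bounded}); hence $v_0$ is eliminated, leaving exactly \eqref{opt:CSIP}.

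Weak duality, $\textrm{val}\eqref{opt:JDMP}\le\textrm{val}\eqref{opt:CSIP}$, I would establish directly rather than by quoting the conic theorem: for any primal-feasible $\mu$ and dual-feasible $v$, integrate the constraint $h(\theta,s)\le v_i+v_{m+1}d(s,\xi^i)$ against $\mu(\cdot\times\Xi^i)$, sum over $i\in[m]$, use $\mu(\Xi\times\Xi^{m+1})=0$ to discard the remaining mass, and then apply $\mu(\Xi\times\Xi^i)=1/m$, $v_{m+1}\ge 0$, and the transport bound to obtain $\int h(\theta,s)\,\mu(ds\times\Xi)\le \tfrac1m\sum_{i}v_i+r_0v_{m+1}$.

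The crux is the reverse inequality (no duality gap), which I expect to be the main obstacle. Here I would invoke an infinite-dimensional conic duality theorem whose hypotheses can be verified from Assumption~\ref{ass:theta_Xi_compact_h_bounded}. The decisive constraint qualification is a Slater point for the dual: because $\Xi\times\Xi$ is compact, the nonnegative cone in $C(\Xi\times\Xi)$ (under the sup norm) has nonempty interior, namely the strictly positive functions, and the Lipschitz estimate $h(\theta,s)-h(\theta,\xi^i)\le C(\theta)d(s,\xi^i)$ lets me exhibit such a point explicitly: choosing $v_i>h(\theta,\xi^i)$ for $i\in[m]$ and $v_{m+1}>C(\theta)$ makes every constraint of \eqref{opt:CSIP} strict on $\Xi$. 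Together with finiteness of $\textrm{val}\eqref{opt:JDMP}$ from Theorem~\ref{thm:problem_equivalence} and feasibility of \eqref{opt:JDMP} (witnessed by the zero-cost coupling of Proposition~\ref{prop:int_d_K_zero}), this strict feasibility yields zero duality gap. The delicate point is purely technical: one must confirm that the attainment and closedness hypotheses of the chosen conic theorem genuinely hold in the measure/continuous-function pairing. A self-contained fallback I would keep in reserve is that every feasible $\mu$ has total mass $\mu(\Xi\times\Xi)=1$, so the primal feasible set is weak\textsuperscript{*}-compact and the objective and constraints are weak\textsuperscript{*}-continuous, which closes the gap through a minimax/saddle-point argument without an external qualification; solvability of the finite-dimensional \eqref{opt:CSIP} then also follows from coercivity of its objective (each $v_i$ occurs only in its own constraint family, reducing the problem to a coercive convex program in $v_{m+1}\ge 0$).

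Finally I would read off the polytope $\mathcal{H}$ from feasibility and the optimal value. Evaluating the dual constraint at $s=\xi^i$, where $d(\xi^i,\xi^i)=0$, forces $v_i\ge h(\theta,\xi^i)\ge C_1$, and $v_{m+1}\ge 0$ is part of the definition. For the upper bounds, since $h\le C_2$ and the first marginal of any feasible $\mu$ is a probability measure, $\textrm{val}\eqref{opt:JDMP}\le C_2$; by strong duality any optimal $v$ satisfies $\tfrac1m\sum_i v_i+r_0v_{m+1}\le C_2$. Combining this with $\tfrac1m\sum_i v_i\ge C_1$ gives $v_{m+1}\le(C_2-C_1)/r_0$, and combining it with the lower bounds $v_{i'}\ge C_1$ on the remaining components gives the stated (valid, if loose) upper bound $v_i\le(m+1)C_2-mC_1$. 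Everything except the no-gap step is routine once the conic framework and Assumption~\ref{ass:theta_Xi_compact_h_bounded} are in place.
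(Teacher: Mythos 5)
Your dualization itself follows the same conic-programming architecture as the paper: write the constraints of \eqref{opt:JDMP} through a finite-dimensional affine map, attach multipliers, observe that finiteness of the inner supremum over $\mu\succeq 0$ forces pointwise nonpositivity of the integrand, split on $s_2$, and eliminate the multiplier of the $\Xi^{m+1}$-constraint because it is absent from the objective. The paper reaches the same conclusion by concentrating $\mu$ at single points (using only that $\mathcal{F}$ contains singletons), and your direct integration argument for weak duality is a harmless substitute for quoting the conic theorem. Where you genuinely diverge is the no-gap step and, consequently, the logical role of the polytope $\mathcal{H}$. The paper invokes Proposition~2.8(iii) of \cite{shapiro2001_conic-lp}: strong duality follows once $\textrm{val}\eqref{opt:JDMP}$ is finite and $\textrm{Sol}\eqref{opt:CSIP}$ is nonempty and bounded, and the boundedness is certified by exhibiting the explicit feasible point $v=[C_2,\dots,C_2,0]$ (giving $\textrm{val}\eqref{opt:CSIP}\le C_2$ \emph{without} strong duality) together with the lower bounds $v_i\ge C_1$ from substituting $s=\xi^i$. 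So in the paper $\mathcal{H}$ is the engine of the strong-duality proof; in your write-up it is a corollary of strong duality. Your ordering is internally consistent only because you supply an independent attainment argument for \eqref{opt:CSIP} via coercivity, which does work.

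The one point I would push back on is your primary mechanism for closing the gap. You pair measures with $C(\Xi\times\Xi)$ via Riesz representation and argue from the nonempty interior of the nonnegative cone of continuous functions, exhibiting a dual Slater point. But the dual constraint function $\mathcal{A}^*v+\psi_0$ contains the indicator factors $\mathbf{1}_{\Xi^i}(s_2)$ (indicators of singletons) and the metric $d$, which the paper assumes only to be $\mathcal{F}\times\mathcal{F}$-measurable, not continuous (the paper explicitly states that Section~3 makes no assumption on $d$). Hence this function need not lie in $C(\Xi\times\Xi)$ at all, and the interior-point argument does not apply in the pairing you chose; this is precisely why the paper works in the larger space of $\mu$-integrable functions and uses the boundedness criterion instead. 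Your argument is repairable --- decompose the primal into $m+1$ measures on $\Xi$ indexed by scenario so that each dual constraint is a function of $s_1$ alone, and additionally assume $d(\cdot,\xi^i)$ continuous --- and your weak$^*$-compactness/minimax fallback is the more robust of your two options, but as stated the Slater route has a real hole that your own hedge (``the delicate point'') correctly anticipates.
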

\begin{proof}
We note that (\ref{opt:JDMP}) can be rewritten as:
\begin{equation}
{\def\arraystretch{1.2}
\begin{array}{cll}
 \underset{\mu}{\textrm{sup}} & \displaystyle   \int_{(s_1\times s_2)\in\Xi\times\Xi} h(\theta,s_1)\; \mu(ds_1\times ds_2) &  \\ 
\textrm{s.t.}  &  \displaystyle \int_{(s_1\times s_2)\in\Xi\times\Xi} {\bf 1}_{\Xi^i}(s_2)\; \mu(s_1\times s_2)  = 1/m, &  i\in [m]  \\
&  \displaystyle \int_{(s_1\times s_2)\in\Xi\times\Xi} {\bf 1}_{\Xi^{m+1}}(s_2)\; \mu(s_1\times s_2)  = 0, \\
&  \displaystyle  \int_{(s_1\times s_2)\in\Xi\times\Xi} \Big[ \sum_{i\in[m]} d(s_1,s_2) \cdot {\bf 1}_{\Xi^i}(s_2)\Big] \; \mu(ds_1\times ds_2)\le r_0, & \\
&  \displaystyle  \mu \succeq 0, &   
\end{array}
}
\label{opt:JDMP_int_form}
\end{equation} 
where ${\bf 1}_{\Xi^i}(s)$ is an indicator function. Note that the second constraint in \eqref{opt:JDMP_int_form} is defined on the set $\Xi^{m+1}=\Xi\setminus (\cup_{i\in[m]} \Xi^i)$. For a given $\theta$, we define functions $\big\{\psi_j\big\}^{m+2}_{j=0}$ as follows:
\begin{equation}
\psi_j(s_1, s_2) = \left \{
			\begin{array}{ll}
			 h(\theta,s_1)  &  j=0 \\
			{\bf 1}_{\Xi^j}(s_2) & j\in [m+1] \\
			\sum^m_{i=1} d(s_1,s_2)\cdot {\bf1}_{\Xi^i}(s_2) & j=m+2.
			\end{array}
		\right.
\end{equation}
Clearly $\{\psi_j\}^{m+2}_{j=0}$ are bounded $\mathcal{F}\times\mathcal{F}$-measurable functions on $\Xi\times\Xi$. It follows that $\{\psi_j\}^{m+2}_{j=0}$ are $\mu$-integrable for any $\mu\succeq 0$. We will first put (\ref{opt:JDMP_int_form}) in a standard form of conic linear program. Let $\mathcal{X}$ be the linear space of \emph{finite signed measures}, and $\mathcal{X}^+:=\big\{ \mu\in \mathcal{X}: \textrm{ } \mu\succeq 0 \big\}$ be the set of non-negative measures which is a convex cone in $\mathcal{X}$. Let $\mathcal{X}^{\prime}$ be the set of functions that are $\mu$-integrable for all $\mu\in\mathcal{X}^+$, i.e.,
$
\mathcal{X}^{\prime}:=\{ f:\Xi\times\Xi\rightarrow\mathbb{R} \textrm{ }|\textrm{ } f\in L^1(\Xi\times\Xi,\mu),\textrm{ }\forall \mu\in\mathcal{X}^+ \}. 
$
Define $\langle \mu, f \rangle := \int_{(s_1\times s_2)\in\Xi\times\Xi} f(s_1,s_2) \mu(ds_1\times ds_2),\; \forall \mu\in\mathcal{X}, f\in\mathcal{X}^{\prime}$.
Define the linear operator $\mathcal{A}:\mathcal{X}\rightarrow \mathbb{R}^{m+2}$ as
\begin{equation}\label{def:operator_A}
\mathcal{A}\mu:=\big[\langle \mu,\psi_1 \rangle,\ldots,\langle \mu,\psi_{m+2} \rangle \big]^T. 
\end{equation}
Define a vector $b$ as $b=\big[\underbrace{1/m, \ldots, 1/m}_m,0,r_0 \big]^T \in \mathbb{R}^{m+2}$, and a convex cone $\mathcal{K}:={\bf 0}^{m+1}\times (-\infty, 0]$  in $\mathbb{R}^{m+2}$. Using the above notations, (\ref{opt:JDMP_int_form}) can be rewritten as:
\begin{equation}
\begin{array}{cl}
\underset{\mu}{\text{sup}}  &  \displaystyle \langle \mu, \psi_0 \rangle  \\ 
\textrm{s.t.} &\displaystyle \mathcal{A}\mu - b  \in \mathcal{K}   \\
& \quad \mu \in  \mathcal{X}^+.   
\end{array}
\tag{CLP1}\label{opt:JDMP-conic-LP}
\end{equation}
Using Lemma~\ref{thm:duality_of_conic_linear_optimization}(a) in Appendix~\ref{appendix:dual-thm-conic-prog}  from \cite{shapiro2001_conic-lp}, we have the following dual of (\ref{opt:JDMP-conic-LP}):
\begin{equation}
\begin{array}{cl}
\underset{w}{\textrm{inf}}  & \displaystyle -b^T\cdot w  \\ 
\textrm{s.t.} & \mathcal{A}^*w+ \psi_0 \in -(\mathcal{X}^+)^*   \\
& \quad w \in \mathcal{K}^*,    
\end{array}
\label{opt:JDMP_dual_1}
\end{equation}
where $\mathcal{K}^*=\mathbb{R}^{m+1}\times(-\infty,0]$ is the dual cone of $\mathcal{K}$, 
and $-(\mathcal{X}^+)^*=\{f\in \mathcal{X}^{\prime}:\; \langle \mu, f \rangle\le 0, \forall \mu\succeq 0  \}$ is the polar cone of $\mathcal{X}^{+}$. The adjoint operator $\mathcal{A}^*$ acts on $\mathbb{R}^{m+2}$ as: $\mathcal{A}^*v=\sum^{m+2}_{i=1}v_i\psi_j$, $\forall v\in\mathbb{R}^{m+2}$. Therefore, the dual problem (\ref{opt:JDMP_dual_1}) can be expressed as follows:
\begin{equation}
\begin{array}{cll}
\underset{w}{\textrm{min}}  & \displaystyle -\frac{1}{m}\sum^m_{i=1} w_i - r_0\cdot w_{m+2} &  \\ 
\textrm{s.t.} & \displaystyle  \langle \mu,\psi_0 \rangle + \sum^{m+2}_{i=1} w_i\langle \mu, \psi_i \rangle \le 0, &  \forall \mu\in\mathcal{X}^+,  \\
& w_{m+2} \le 0. &     
\end{array}
\label{opt:JDMP_dual_2}
\end{equation}
We have
\begin{align}
\sum^{m+2}_{i=1} w_i\langle \mu, \psi_i \rangle + \langle \mu,\psi_0 \rangle
 &= \sum^m_{i=1} \int_{\Xi\times\Xi^i} \big[w_i+w_{m+2}\cdot d(s_1,\xi^i)+h(\theta,s_1)\big] \mu(ds_1\times ds_2) \nonumber\\
&\qquad + \int_{\Xi\times\Xi^{m+1}} \big[w_{m+1}+h(\theta,s_1)\big]\mu(ds_1\times ds_2). \label{eqn:sum_wi<mu,psi>} 
\end{align}
Since $\mathcal{F}$ contains all singletons in $\Xi$, all integrands on the RHS of (\ref{eqn:sum_wi<mu,psi>}) must be non-positive to ensure the constraint $\sum^{m+2}_{i=1} w_i\langle \mu, \psi_i \rangle + \langle \mu,\psi_0 \rangle \le 0$,  $\forall \mu\in\mathcal{X}^+$. Otherwise, we concentrate the measure $\mu$ on the points $(\hat{s}\times \xi^i)$, $i\in[m]$, or the set $(\hat{s},\Xi^{m+1})$ at which one of the integrands is positive to achieve a contradiction.  In other words, $\sum^{m+2}_{i=1} w_i\langle \mu, \psi_i \rangle + \langle \mu,\psi_0 \rangle \le 0$,  $\forall \mu\in\mathcal{X}^+$ are equivalent to the following constraints:
{
\begin{align}
w_i+w_{m+2}\cdot d(s,\xi^i)+h(\theta,s)&\le 0, \quad \forall s\in\Xi \quad i\in[m],
\label{eqn:dual_JDMP_constraint_1} \\
w_{m+1}+h(\theta,s) &\le 0, \quad  \forall s\in\Xi.\label{eqn:dual_JDMP_constraint_2}
\end{align}
}
Since $w_{m+1}$ does not appear in the objective function of (\ref{opt:JDMP_dual_2}), and constraint (\ref{eqn:dual_JDMP_constraint_2}) 
can be satisfied  by choosing $w_{m+1}$ sufficiently small, we can remove $w_{m+1}$ and constraint (\ref{eqn:dual_JDMP_constraint_2}) from the dual problem. By rewriting variables $w_i$ as $-v_i$ for $i\in[m]$ and $w_{m+2}$ as $-v_{m+1}$,  the dual problem (\ref{opt:JDMP_dual_2}) can be rewritten as (\ref{opt:CSIP}).

We now show that strong duality holds. We only need to verify that val(\ref{opt:JDMP}) is finite and show that Sol(\ref{opt:CSIP}) is nonempty and bounded \cite{shapiro2001_conic-lp} (see Lemma~\ref{thm:duality_of_conic_linear_optimization}(b), in Appendix~\ref{appendix:dual-thm-conic-prog}). 
First, since the objective in (\ref{opt:JDMP}) is an expectation of a bounded function, val(\ref{opt:JDMP}) is finite. Second, we note that the feasible set of (\ref{opt:CSIP}), denoted as Fsb(\ref{opt:CSIP}), is a closed convex subset in $\mathbb{R}^{m+1}$. 
Since $C_1\le h \le C_2$ on $\Theta\times\Xi$, $v=\big[\underbrace{C_2,\ldots,C_2}_m,0\big]^T$ is a feasible solution of (\ref{opt:CSIP}), we have $\textrm{val(\ref{opt:CSIP})}\le C_2$. Moreover, substituting $s=\xi^i$ in the $i^{\textrm{th}}$ constraint of (\ref{opt:CSIP}), we have  $v_i\ge h(\theta,\xi^i)\ge C_1,\;i\in[m] $, which means that $C_1$ is a lower bound on $v_i$, $i\in[m]$. To show that $\textrm{Sol(\ref{opt:CSIP})}$ is bounded, 
consider any $v^*\in\textrm{Sol(\ref{opt:CSIP})}$. Since $\textrm{val(\ref{opt:CSIP})}\le C_2$, 
we have $\frac{1}{m}\sum^m_{i=1} v^*_i + r_0 \cdot v^*_{m+1}\le C_2$.
It follows that 
\begin{equation}
\begin{aligned}
&v^*_{m+1} \le \frac{1}{r_0} \Big( C_2 - \frac{1}{m}\sum^m_{i=1} v^*_i  \Big) \le \frac{1}{r_0} (C_2 - C_1), \\
&v^*_i \le m(C_2+r_0\cdot v^*_{m+1})-\sum^{i-1}_{j=1}v^*_j-\sum^m_{j=i+1}v^*_j 
\le (m+1)C_2 - m C_1, \quad  i\in[m], 
\end{aligned}
\end{equation}
which provide upper bounds on $v^*_i$, $i\in[m+1]$. Therefore, Sol(\ref{opt:CSIP}) is bounded by the following compact set:
\begin{equation}\label{eqn:s_compact_support}
\begin{aligned}
\mathcal{H}:=\Big\{v\in\mathbb{R}^{m+1}:\;&C_1 \le v_i\le (m+1)C_2 - m C_1,\textrm{ for }i\in[m], \\
 & 0\le v_{m+1}\le (C_2-C_1)/r_0 \Big\}. 
\end{aligned}
\end{equation}
Now Fsb(\ref{opt:CSIP})$\cap \mathcal{H}$ is a non-empty compact set in $\mathbb{R}^{m+1}$, and $\textrm{Sol(\ref{opt:CSIP})}$ is a subset of
$\textrm{Fsb(\ref{opt:CSIP})}\cap\mathcal{H}$. By Weierstrass Theorem \cite{bertsekas2003_conv-anal-opt}, there exists an optimal solution
to (\ref{opt:CSIP}). Therefore, Sol(\ref{opt:CSIP}) is nonempty, bounded, and attains its optimum at a solution in $\mathcal{H}$.
\end{proof}
\begin{corollary}
\label{cor:SIP_form}
 Let Assumption~\ref{ass:theta_Xi_compact_h_bounded}  hold. The Wasserstein-robust optimization problem \eqref{opt:DRO} is equivalent to the following semi-infinite  program:
\begin{equation}
\begin{array}{cll}
 \underset{\theta,v}{\emph{min}} & \displaystyle \frac{1}{m}\sum^m_{i=1} v_i + r_0\cdot v_{m+1} & \\ 
\emph{s.t.} &\displaystyle  h(\theta,s) - v_i - v_{m+1}\cdot d(s,\xi^i) \le 0, & s\in\Xi, i\in[m]  \\    
& \theta\in\Theta, v\in\mathcal{H}.    
\end{array}
\tag{WRO-D}\label{opt:DRO-D}
\end{equation}
\end{corollary}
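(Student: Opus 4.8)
The plan is to build \eqref{opt:DRO-D} by substituting, for each fixed $\theta$, the dual representation of the inner problem established in Theorems~\ref{thm:problem_equivalence} and \ref{thm:inner-problem-dualization}, and then merging the outer infimum over $\theta$ with the inner minimization over $v$ into a single joint minimization. First I would fix an arbitrary $\theta\in\Theta$. By Theorem~\ref{thm:problem_equivalence} the inner problem $\sup_{P\in\mathcal{P}}\mathbb{E}_P[h(\theta,\xi)]$ equals the optimal value of \eqref{opt:JDMP}, and by the strong duality part of Theorem~\ref{thm:inner-problem-dualization} this in turn equals the optimal value of \eqref{opt:CSIP}. Hence, for every $\theta$, the inner supremum coincides with the minimization of $\tfrac1m\sum_{i\in[m]} v_i + r_0 v_{m+1}$ over $v_{m+1}\ge 0$ subject to the semi-infinite constraints $h(\theta,s)-v_i-v_{m+1}\,d(s,\xi^i)\le 0$ for all $s\in\Xi$ and $i\in[m]$.

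Next I would invoke the bounding result in Theorem~\ref{thm:inner-problem-dualization}: for $r_0>0$ the solution set of \eqref{opt:CSIP} is nonempty and contained in the polytope $\mathcal{H}$. The essential point is that $\mathcal{H}$ is defined through the uniform bounds $C_1,C_2$ of $h$ on $\Theta\times\Xi$ and therefore does \emph{not} depend on $\theta$. Consequently, for every $\theta\in\Theta$ one may append the constraint $v\in\mathcal{H}$ to \eqref{opt:CSIP} without altering its optimal value, so the inner supremum is also equal to the minimization of the same objective over the feasible set intersected with $\mathcal{H}$.

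Finally I would take the infimum over $\theta\in\Theta$ on both sides of this pointwise identity. Since, for each $\theta$, the inner supremum has now been rewritten as a minimization over $v$ whose feasible region depends on $\theta$ only through the semi-infinite constraints (the box $\mathcal{H}$ being fixed), the nested expression $\inf_{\theta\in\Theta}\min_{v}$ collapses into a single joint infimum over $(\theta,v)\in\Theta\times\mathcal{H}$ subject to $h(\theta,s)-v_i-v_{m+1}\,d(s,\xi^i)\le 0$ for all $s\in\Xi$, $i\in[m]$, which is precisely \eqref{opt:DRO-D}. I would emphasize that no interchange of $\inf$ and $\sup$ is involved: the dual substitution is performed pointwise in $\theta$, and the outer infimum is applied only afterward, so the argument carries no minimax gap.

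The hard part—indeed the only genuine subtlety—will be justifying the restriction of $v$ to the \emph{fixed} polytope $\mathcal{H}$ simultaneously for all $\theta\in\Theta$. This rests on $C_1,C_2$ being uniform bounds over the whole of $\Theta\times\Xi$, which is guaranteed by Assumption~\ref{ass:theta_Xi_compact_h_bounded} (under which $h$ is bounded on $\Theta\times\Xi$); it is this uniformity that lets the same $\mathcal{H}$ contain every optimal solution of \eqref{opt:CSIP} as $\theta$ ranges over $\Theta$. Were $\mathcal{H}$ to vary with $\theta$, the merge into a joint minimization would require additional care. The remaining steps—rewriting the nested minimizations as one and matching the objective and constraints of \eqref{opt:DRO-D} line by line against those of \eqref{opt:CSIP}—are routine.
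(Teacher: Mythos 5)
Your proposal is correct and follows essentially the same route as the paper: substitute the dual reformulation \eqref{opt:CSIP} of the inner problem (via Theorems~\ref{thm:problem_equivalence} and \ref{thm:inner-problem-dualization}) pointwise in $\theta$, append the constraint $v\in\mathcal{H}$ using the boundedness of the dual solution set, and collapse the nested minimizations into a joint one. Your explicit remark that $\mathcal{H}$ is independent of $\theta$ because $C_1,C_2$ bound $h$ uniformly on $\Theta\times\Xi$ is a point the paper leaves implicit, and it is the right thing to flag.
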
  
\begin{proof}
From Theorem~\ref{thm:inner-problem-dualization}, the inner problem of \eqref{opt:DRO} is reformulated as a minimization problem with semi-infinite 
constraints. We can now combine \eqref{opt:CSIP} with the outer problem of \eqref{opt:DRO}, and have an equivalent combined formulation as \eqref{opt:DRO-D}. 
Since from Theorem~\ref{thm:inner-problem-dualization} the optimal solution of the inner problem is bounded in the polytope $\mathcal{H}$,
these additional constraints are added to \eqref{opt:DRO-D}.  
\end{proof}
We note that the constraints in \eqref{opt:DRO-D} decompose in the scenarios $\xi^i$, $i\in [m]$;
and for a given $s$, $d(s,\xi^i)$ is a constant.

\section{Algorithms for the (WRO) Refomulation}
\label{sec:alg-SDP}
Corollary~\ref{cor:SIP_form} shows that the Wasserstein-robust optimization problem \eqref{opt:DRO} is equivalent to \eqref{opt:DRO-D},
 which is a semi-infinite program. Any algorithm for solving a general semi-infinite program can now be applied to solve \eqref{opt:DRO-D}. 
 For a general continuous function $h(\theta,s)$ in $\theta$ we present a modification of the exchange algorithm \cite{hettich1993_SIP-thy-methd-appl} in Section~\ref{sec:exchange_method}, which ensures $\varepsilon$-optimality after solving a finite number of finitely constrained master problems. In the special case where $h(\theta,s)$ is a convex function of $\theta$, \eqref{opt:DRO-D} is a convex semi-infinite program. We adapt the cutting surface algorithm in \cite{mehrotra2015} for \eqref{opt:DRO-D} and use its structure to achieve a global linear rate of convergence.
 
Let $x=[\theta,v]$ be the decision variables, and define the following functions:
\begin{equation}
\begin{aligned}
f(x) &:= \frac{1}{m} \sum^m_{i=1} v_i + r_0\cdot v_{m+1},   \\
g_i(x,s) &:= h(\theta,s) - v_i - v_{m+1}\cdot d(s,\xi^i), \quad i\in[m]. 
\end{aligned}
\end{equation} 
The problem (\ref{opt:DRO-D}) can be rewritten as:
\begin{equation}
\begin{array}{cll}
 \underset{x}{\textrm{min}} & \displaystyle f(x) & \\ 
\textrm{s.t.} &\displaystyle g_i(x,s)\le0, & s\in\Xi, i\in[m]  \\    
& x\in X,  \\  
\end{array}
\tag{SIP}\label{opt:SIP}
\end{equation} 
where $X=\Theta\times \mathcal{H}$. Problem \eqref{opt:SIP} is a semi-infinite program. An approach to obtain a solution of such problems is to solve relaxation problems (master problems) with a finite number of constraints, and add a violated constraint obtained from solving a separation problem (defined for $\forall\;i\in[m]$) to tighten the formulation iteratively. In particular, the separation problem of \eqref{opt:SIP} for identifying a violated constraint at the solution 
$(\widetilde{x},\widetilde{v})$ of the current master problem can be written as follows:
\begin{equation}
\underset{s\in\Xi}{\textrm{max}} \; g_i(\widetilde{x}, s)=h(\widetilde{\theta},s) - \widetilde{v}_i - \widetilde{v}_{m+1}\cdot d(s,\xi^i), \quad \textrm{for }i\in[m]. 
\tag{Sep-$i$} 
\label{opt:infeasible_detection}
\end{equation}
The inequality generated from solving \eqref{opt:infeasible_detection} is called a feasibility cut. 

For clarity of notations, we consider the following general form of a semi-infinite program:
\begin{equation}
\begin{array}{cll}
 \underset{x}{\textrm{min}} & \displaystyle f(x) & \\ 
\textrm{s.t.} &\displaystyle g(x,t)\le 0, & t\in T, \\    
& x\in X,    
\end{array}
\tag{gen-SIP}\label{opt:gen-SIP}
\end{equation}
where $X\subseteq\mathbb{R}^{k_1}$ and $T\subseteq\mathbb{R}^{k_2}\times\mathbb{Z}^{k_3}$, allowing that $T$ may be defined as a mixed-integer set.

\subsection{A modified exchange algorithm for (WRO) model}
\label{sec:exchange_method}
We now assume that we have an oracle to solve the master and separation problems of \eqref{opt:gen-SIP}. 
The modified exchange algorithm given in Algorithm~\ref{alg:gen-SIP} allows an $\varepsilon$-optimal solution
 to \eqref{opt:gen-SIP}, when compared with the original exchange algorithm.
Theorem~\ref{thm:finite-converg} shows that our modified exchange method finds a solution of a desired accuracy in finitely many iterations.
 
\begin{definition}
\label{def:accuracy-gen-SIP}
A point $z_0\in Z$ is an $\varepsilon$-feasible solution of \eqref{opt:gen-SIP} if \newline $\underset{t\in T}{\emph{max}}\; g(z_0,t)\le \varepsilon$.
A point $z_0\in Z$ is an $\varepsilon$-optimal solution of \eqref{opt:gen-SIP} if $z_0$ is $\varepsilon$-feasible and $f(z_0)\le \emph{val}\eqref{opt:gen-SIP}$.
\end{definition}
\begin{theorem}
\label{thm:finite-converg}
Let $Z\times T$ be compact, and $g(z,t)$ be continuous on $Z\times T$. Suppose we have an oracle that generates an optimal solution of the problem $\underset{z\in Z}{\emph{min}}\;\{ f(z):\;  g(z,t)\le 0, \; t\in T^{\prime}   \}$ for any finite set $T^{\prime}\subseteq T$, and an oracle that generates an $\varepsilon$-optimal solution of the problem $\underset{t\in T}{\emph{max}}\; g(z, t)$ for any $z\in Z$ and $\varepsilon>0$. Then Algorithm~\ref{alg:gen-SIP} returns an $\varepsilon$-optimal solution of \eqref{opt:gen-SIP} in finitely many iterations. 
\end{theorem}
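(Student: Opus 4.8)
The plan is to exploit two structural facts about the exchange scheme in Algorithm~\ref{alg:gen-SIP}: a \emph{lower-bound} property of the master problems and a \emph{cut-consistency} property of the accumulated constraints. Write $T^k\subseteq T$ for the finite constraint set maintained at iteration $k$, let $x^k\in Z$ be the master solution returned by the optimization oracle, and let $t^k\in T$ be the point returned by the separation oracle applied at $x^k$. Since $T^k\subseteq T$, the master problem $\min_{z\in Z}\{f(z): g(z,t)\le 0,\ t\in T^k\}$ is a relaxation of \eqref{opt:gen-SIP}, so $f(x^k)\le \mathrm{val}\eqref{opt:gen-SIP}$ for every $k$. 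Thus the optimality half of Definition~\ref{def:accuracy-gen-SIP} holds automatically at every iterate, and the entire burden falls on establishing $\varepsilon$-feasibility at termination.

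First I would fix the separation tolerance at $\varepsilon/2$ and set the stopping rule to halt when the oracle reports a value $g(x^k,t^k)\le \varepsilon/2$. Because the oracle returns an $(\varepsilon/2)$-optimal maximizer, at termination $\max_{t\in T}g(x^k,t)\le g(x^k,t^k)+\varepsilon/2\le \varepsilon$, so $x^k$ is $\varepsilon$-feasible and hence, by the lower-bound property, $\varepsilon$-optimal. This settles correctness of the output; it remains to show the stopping rule is triggered after finitely many iterations.

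Next I would argue finite termination by contradiction. Suppose the algorithm never stops. Then at every iteration $g(x^k,t^k)>\varepsilon/2$, while cut-consistency — that $t^k$ is appended to the constraint set, so $t^k\in T^j$ for every $j>k$ — forces $g(x^j,t^k)\le 0$ for all $j>k$. Since $Z\times T$ is compact and $g$ is continuous, $g$ is uniformly continuous on $Z\times T$; choose $\rho>0$ so that $\|x-x'\|<\rho$ implies $|g(x,t)-g(x',t)|<\varepsilon/2$ for all $t\in T$. By sequential compactness of $Z$ the iterates $\{x^k\}$ admit a Cauchy subsequence, so there exist indices $k<j$ with $\|x^j-x^k\|<\rho$. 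Evaluating at the common point $t^k$ gives
\begin{equation}
\tfrac{\varepsilon}{2}<g(x^k,t^k)-0\le g(x^k,t^k)-g(x^j,t^k)\le \big|g(x^k,t^k)-g(x^j,t^k)\big|<\tfrac{\varepsilon}{2}, \nonumber
\end{equation}
a contradiction. Hence the stopping criterion must fire after finitely many iterations, which proves the claim.

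The main obstacle is bookkeeping the two approximations simultaneously: the inexactness of the separation oracle must be absorbed into the same $\varepsilon/2$ budget that controls both the $\varepsilon$-feasibility certificate at termination and the positive gap driving the contradiction, so that a single choice of tolerances closes all three estimates. Once the tolerances are aligned, the compactness-and-continuity argument is routine; the only subtlety worth checking is that the contradiction uses the \emph{same} separation point $t^k$ at two different iterates, which is exactly what cut-consistency supplies and what lets a modulus of continuity in the $x$-variable alone suffice.
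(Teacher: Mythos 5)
Your proof is correct, and the termination argument takes a genuinely different route from the paper's. Both proofs share the same skeleton: the master problem is a relaxation, so $f(x^k)\le\mathrm{val}\eqref{opt:gen-SIP}$ at every iterate; the $\varepsilon/2$-accurate separation oracle plus the stopping rule certifies $\varepsilon$-feasibility at termination; and uniform continuity of $g$ on the compact set $Z\times T$ drives a contradiction if the algorithm never stops. The difference lies in where the pigeonhole is applied. The paper works in the $T$-space: it shows the separation points $t_1,t_2,\dots$ must be pairwise $\alpha$-separated (otherwise the termination test would already have fired), and then invokes a volume/packing argument in the bounded set $T$ to conclude only finitely many such points can exist. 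You instead work in the $Z$-space: cut-consistency gives $g(x^j,t^k)\le 0$ for $j>k$ while non-termination gives $g(x^k,t^k)>\varepsilon/2$, and sequential compactness of $Z$ produces two iterates within the modulus $\rho$, yielding $\varepsilon/2<\varepsilon/2$ directly. Your version is arguably cleaner: it needs only a modulus of continuity in the $x$-variable uniform over $t$, it avoids any volume computation, and it sidesteps a small imprecision in the paper's packing step (the claim $t_{k+1}\notin\cup_{i\le k}B(t_i,\alpha)$ yields pairwise disjointness only after halving the radius, a cosmetic fix). The paper's version, on the other hand, makes explicit that the number of iterations is controlled by how many $\alpha$-separated points fit in $T$, which gives a crude a priori iteration bound that your existence-of-a-close-pair argument does not. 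Both are complete proofs of the stated theorem.
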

\begin{proof}
Since $g(z,t)$ is continuous on $Z\times T$, and $Z\times T$ is compact, it follows that $g(z,t)$ is uniformly continuous on $Z\times T$.
Therefore, there exists an $\alpha>0$ such that 
\begin{equation}\label{eqn:g-g}
|g(z^{\prime},t^{\prime}) - g(z,t)|\le \frac{\varepsilon}{2}, \;\;\;\; \textrm{if } \|z^{\prime}-z \| + \|t^{\prime} - t \| \le \alpha. 
\end{equation}   
First, we prove by contradiction that the algorithm terminates in finitely many iterations.
Suppose the algorithm generates infinite sequences $\mathcal{Z}=\{ z_k\}^{\infty}_{0}$ and $\mathcal{T}=\{ t_k\}^{\infty}_{0}$ without terminating.
We claim that $t_{k+1}\notin \cup^{k}_{i=1} B(t_i,\alpha)$ for every $k$, where $B(t_i,\alpha)$ is the closed ball of center $t_i$ and radius $\alpha$
in $\mathbb{R}^{k_2+k_3}$. If not, there exists $t_k$ such that $t_k\in B(t_i,\alpha)$ for some $i<k$. Using \eqref{eqn:g-g} and $t_i\in T_{z-1}$, we have $g(z_{k-1},t_k)  \le g(z_{k-1},t_i) + \frac{\varepsilon}{2}  \le \frac{\varepsilon}{2}$, indicating that the termination criteria is satisfied. 
Therefore, the above claim holds. Now consider the compact set: $\mathcal{B}=\cup_{\{t\in T\}} B(t,\alpha)$. 
Since $B(t_k,\alpha)\subseteq\mathcal{B}$ for every $k$, and by the claim we have $B(t_i,\alpha)\cap B(t_j,\alpha)=\emptyset$ 
for every $t_i,t_j\in\mathcal{T}$ with $i\neq j$, it follows that $\sum^{\infty}_{i=0}\textrm{vol}(B(t_i,\alpha))\le\textrm{vol}(\mathcal{B})$, 
which leads to a contradiction.

We now prove that once the algorithm terminates, it returns an $\varepsilon$-solution. 
Suppose it terminates at the end of the $n$th iteration. Then $z_n$ is an optimal solution
of the problem $\underset{z\in Z}{\textrm{min}}\;\{ f(z):\; g(z,t)\le 0, \; t\in T_n   \}$.
It follows that $f(z_n)\le \textrm{val}(T_n)  \le \textrm{val}\eqref{opt:gen-SIP}$,
where $\textrm{val}(T_n)$ is the optimal value of the problem \eqref{opt:gen-SIP} with constraint index set $T_n$. 
By the separation oracle and termination criteria, it also holds that $\underset{t\in T}{\textrm{max}}\; g(z_k, t)\le g(z_n,t_{n+1})+\frac{\varepsilon}{2}\le \varepsilon$. Hence, $z_n$ is an $\varepsilon$-optimal solution of \eqref{opt:gen-SIP}.
\end{proof}

\begin{algorithm}
	\caption{A modified exchange algorithm to solve \eqref{opt:gen-SIP}. }
	\label{alg:gen-SIP}
	\begin{algorithmic}
	\State {\bf Prerequisites}: Two oracles specified in Theorem~\ref{thm:finite-converg}.
	\State {\bf Output}: An $\varepsilon$-optimal solution of (\ref{opt:gen-SIP}).
	 \State{\bf Step 1} Set $T_0\gets\emptyset$, $k\gets 0$.   
	 \State{\bf Step 2}  Determine an optimal solution $z_k$ of the problem 
	 				  $\underset{z\in Z}{\textrm{min}}\;\{ f(z):\; \textrm{s.t. } g(z,t)\le 0, \; t\in T_k   \}$.
   
	 \State{\bf Step 3}  Determine a $\frac{\varepsilon}{2}$-optimal solution $t_{k+1}$ of the problem $\underset{t\in T}{\textrm{max}}\; g(z_k, t)$.
	 				  If $g(z_k,t_{k+1})\le \frac{\varepsilon}{2}$, stop and return $z_k$; otherwise let $T_{k+1} \gets T_{k}\cup\{ t_{k+1}\}$, 
					  $k\gets k+1$ and go to Step 2	 
     
       	\end{algorithmic}	 
\end{algorithm}

\subsection{A central cutting-surface algorithm for the convex case}
\label{sec:cutting-surface-alg}
In this section we make the following additional assumption.
 \begin{assumption}
\label{ass:theta_convex_h_convex}
The feasible region $\Theta$ is convex, and the function $h(\cdot,s)$ is convex for every $s\in\Xi$. 
Furthermore, there exists a parameter $B$ satisfying the following condition:
\begin{equation}
\label{eqn:B_condition}
B>\|\eta\|, \quad \forall \; \eta\in\partial_{\theta} h(\theta,s) \;\; \forall \; \theta\in \Theta, \; s\in\Xi,  
\end{equation}
where $\partial_{\theta} h(\theta,s)$ is the sub differential set of $h(\theta,s)$ at $\theta$.
\end{assumption}  
Since the master problem of \eqref{opt:SIP} is a convex optimization problem, we assume it can be solved efficiently up to optimality. 
We now present a central cutting-surface algorithm to solve \eqref{opt:SIP}.
A pseudo-code for this algorithm is given in Algorithm \ref{alg:cutting-surface}. 
The algorithm is initialized with a solution $x^{(0)}=[\theta^0,{\boldsymbol 0}_{m+1}]$, where $\theta^0$ may be taken as a solution of the empirical deterministic optimization problem: 
\begin{equation}
\label{opt:EDO}
\underset{\theta\in\Theta}{\textrm{min}}\; \frac{1}{m}\sum^m_{i=1} h(\theta,\xi^i).
\tag{EDO}
\end{equation}
At the $k$th iteration of this algorithm (Step 1) a master problem with a centering argumentation is solved. 
This problem is defined by a set of constraints (with the index set $Q^{(k-1)}\subseteq\Xi$) inherited from the $(k-1)$th iteration.
The master problem at the $k$th iteration is formulated as follows:
		 \begin{equation}
		 \begin{array}{cll}
 		 \underset{x,w,t}{\textrm{max}} & w \\ 
		 \textrm{s.t.} & t + w \le M^{(k-1)}, &        \\
		 			& f(x) - t + B\cdot w \le 0,  &   \\
		                     &  g_i(x,s) + B\cdot w \le 0, & \forall i\in[m],\;\; s\in Q^{(k-1)}, \\
&   x\in X. &   
		\end{array}
		\tag{Master}\label{opt:master}
		\end{equation}
The (\ref{opt:master}) problem is a convex optimization problem. For clarity, we drop the index $i$ in \eqref{opt:master} in Algorithm~\ref{alg:cutting-surface}.
We assume that there is an oracle to find an $\varepsilon$-optimal solution to \eqref{opt:infeasible_detection}
for any $\varepsilon>0$. The algorithm terminates if no feasibility cut is found. Otherwise, the newly found feasibility cut is added to the working set (Step 4). At the end of each iteration, we may optionally drop certain constraints that are not binding at the current solution of the master problem (Step 6). 
\begin{algorithm}[H] 
	\caption{A central cutting-surface algorithm from \cite{mehrotra2015} to solve (\ref{opt:SIP}). }
	\label{alg:cutting-surface}
	\begin{algorithmic}
	\State {\bf Prerequisites}: Assumptions~\ref{ass:theta_Xi_compact_h_bounded} and \ref{ass:theta_convex_h_convex} hold. There exists an oracle to find an $\varepsilon$-optimal 
	solution to (\ref{opt:infeasible_detection}).
	\State {\bf Input}: A strict upper bound $U$ of the objective function $h$, a centering parameter $B>0$ which satisfies (\ref{eqn:B_condition}), a tolerance error $\varepsilon$, an arbitrary $\alpha>1$ specifying how aggressively cuts are dropped. 
	\State {\bf Output}: An $\varepsilon$-optimal solution to (\ref{opt:SIP}).
	
	 \State{\bf Step 1} (Initialization). Set $k\gets1$, $M^{(0)}\gets U$, $v^{(0)}\gets {\bf{0}}_{m+1}$, $x^{(0)}\gets [\theta^0, {\bf 0_{m+1}}]$, 
	 	\State\hspace{\algorithmicindent} $\widetilde{x}^{(0)}\gets x^{(0)}$, where $\theta^0$  is a solution to (\ref{opt:EDO}). Let $Q^{(0)}\gets\big\{ s^{(0)}\}$, where $s^{(0)}=\xi$.

	 \State{\bf Step 2} (Solve a master problem). Determine the optimal solution $(x^{(k)},w^{(k)})$ to \eqref{opt:master}.

	 \State{\bf Step 3} (Optimal soluton?). If $w^{(k)}=0$, stop and return $\widetilde{x}^{(k-1)}$.

        \State{\bf Step 4} (Feasible solution?). Find an $\varepsilon$-optimal solution denoted by $s^{(k)}$ to (\ref{opt:infeasible_detection}) 
        		\State\hspace{\algorithmicindent} using the oracle. If $g(x^{(k)},s^{(k)})>0$ and go to Step 5, otherwise go to Step 6.

       \State{\bf Step 5} (Add a cut). Set $Q^{(k)}\gets Q^{(k-1)}\cup \{s^{(k)}\}$, $\widetilde{x}^{(k)}\gets \widetilde{x}^{(k-1)}$ and $M^{(k)}\gets M^{(k-1)}$.
       			\State\hspace{\algorithmicindent} Go to Step 6.

       \State{\bf Step 6} (Update best know $\varepsilon$-feasible solution). Set $Q^{(k)}\gets Q^{(k-1)}$, $\widetilde{x}^{(k)}\gets x^{(k)}$ $M^{(k)}\gets f(x^{(k)})$.

       \State{\bf Step 7} (Optionally drop cuts). Let $D=\big\{ s^{(l)} \in Q^{(k)}:\; l\in\{0\}\cup[k] \;\big|\;$
        		\State\hspace{\algorithmicindent} $w^{(l)} \ge \alpha w^{(k)} \textrm{ and } g(x^{(k)}, s^{(l)} ) + B\cdot w^{(k)} < 0 \big\}$ and set $Q^{(k)}\gets Q^{(k)}\setminus D$.      
     
      \State\hspace{\algorithmicindent} Set $k\gets k+1$ and go to Step 2.
      
	\end{algorithmic}	 
\end{algorithm}
The convergence of Algorithm~\ref{alg:cutting-surface} is given by Theorem~\ref{thm:convergence-cutting-surf-alg}. 
This theorem is a refinement of the linear rate of convergence result proved in Theorem~8 of  \cite{mehrotra2015} (See Appendix~\ref{appendix1}) for the central cutting surface algorithm
when specialized to \eqref{opt:DRO-D}.
\begin{theorem}
\label{thm:convergence-cutting-surf-alg}
Convergence of the central cutting-surface algorithm.
\begin{enumerate}
	\item[\emph{(a)}] Algorithm~\ref{alg:cutting-surface} either finds an $\varepsilon$-optimal solution of \emph{(\ref{opt:SIP})} in finitely many iterations or generates $\{ \widetilde{x}^{(k)} \}^{\infty}_{k=1}$ that each accumulated point is an $\varepsilon$-optimal solution of \emph{(\ref{opt:SIP})}. 
	\item[\emph{(b)}] Algorithm~\ref{alg:cutting-surface} converges linearly in objective function value between consecutive feasibility cuts. The rate of convergence satisfies:
	\begin{equation}
		\rho \le 1- \frac{1}{2B^{\prime}+1},
	\end{equation}  
	where $B^{\prime}:=\emph{max}\{ \sqrt{r^2_0+1+(1/m)},\; \sqrt{B^2+L^2+1} \}$ and $L=\emph{max}\{d(s,\xi^i):\; s\in \Xi, \; i\in[m] \}$. 
\end{enumerate}
\end{theorem}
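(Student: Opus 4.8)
The plan is to obtain both parts from the general convergence analysis of the central cutting-surface method in Theorem~8 of \cite{mehrotra2015} (recalled in Appendix~\ref{appendix1}), and then to sharpen the abstract rate constant by evaluating it explicitly from the structure of \eqref{opt:DRO-D}. Under Assumptions~\ref{ass:theta_Xi_compact_h_bounded} and~\ref{ass:theta_convex_h_convex}, the set $X=\Theta\times\mathcal{H}$ is compact and convex, the objective $f$ is linear, and each $g_i(\cdot,s)$ is convex with uniformly bounded subgradients; these are precisely the hypotheses under which the cited theorem applies to \eqref{opt:SIP}. Part~(a) then follows immediately: the general result guarantees that Algorithm~\ref{alg:cutting-surface} either terminates after finitely many iterations at an $\varepsilon$-optimal point, or generates an infinite sequence $\{\widetilde{x}^{(k)}\}$ every accumulation point of which is $\varepsilon$-optimal for \eqref{opt:SIP}.

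For part~(b), I would use that the linear rate in \cite{mehrotra2015} is governed by a single constant, namely a uniform upper bound on the Euclidean norm, taken in the non-centering variables $(\theta,v,t)$, of the (sub)gradients of all constraint functions appearing in \eqref{opt:master}. The refinement consists of computing this bound for the two families of constraints present. For the objective constraint $f(x)-t+Bw\le 0$: since $f$ does not depend on $\theta$ and $\nabla_v f=(1/m,\dots,1/m,r_0)$ while the coefficient of $t$ is $-1$, its gradient has norm $\sqrt{r_0^2+1+1/m}$. For each feasibility constraint $g_i(x,s)+Bw\le 0$: any subgradient in $\theta$ belongs to $\partial_\theta h(\theta,s)$ and hence has norm strictly below $B$ by \eqref{eqn:B_condition}, the coefficient of $v_i$ is $-1$, and the coefficient of $v_{m+1}$ is $-d(s,\xi^i)$ with $d(s,\xi^i)\le L$, so the subgradient norm is bounded by $\sqrt{B^2+L^2+1}$. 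The remaining master constraint $t+w\le M^{(k-1)}$ contributes gradient norm $1$, which is dominated by both of the preceding bounds; taking the maximum therefore yields $B'$ exactly as in the statement, and substituting $B'$ into the general rate $\rho\le 1-1/(2B'+1)$ finishes part~(b).

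The step I expect to be the main obstacle is the last identification: confirming that the abstract constant governing the rate in Theorem~8 of \cite{mehrotra2015} is indeed this maximum constraint-gradient norm and not some larger quantity. This requires following the geometric argument behind the guaranteed linear decrease of the centering value $w^{(k)}$ between consecutive feasibility cuts, and verifying that the centering coefficient $B$ used in \eqref{opt:master} coincides with the subgradient bound from Assumption~\ref{ass:theta_convex_h_convex}, so that the centered hyperplanes have the slopes assumed in that analysis. Once this correspondence is established, the rest is a direct substitution, with the compactness and convexity furnished by the assumptions ensuring that the hypotheses of the cited theorem hold and that the master and separation problems attain their optima.
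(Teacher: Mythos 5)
Your part (a) and your computation of $B'$ match the paper exactly: the paper likewise introduces the epigraph variable $t$, treats it as $x_0$ in the general framework, and bounds the constraint subgradients by $\sqrt{r_0^2+1+1/m}$ (objective constraint) and $\sqrt{B^2+L^2+1}$ (feasibility constraints), taking the maximum to get $B'$. But there is a genuine gap in part (b). The general rate in Theorem~8 of \cite{mehrotra2015} is \emph{not} of the form $1-1/(2B'+1)$; it is
\begin{equation}
\rho^{(k)} \le 1-\frac{\eta-B'w^{(k)}}{\eta + B'( \overline{x}_0 - x^*_0)},
\end{equation}
which depends on the Slater margin $\eta$, the current centering value $w^{(k)}$, and the gap between the Slater point's objective value $\overline{x}_0$ and the optimum $x_0^*$. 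You cannot "substitute $B'$ into the general rate $\rho\le 1-1/(2B'+1)$" because no such form of the general rate exists; the step you flagged as "the main obstacle" is not a matter of confirming which constant appears, but of transforming a Slater-dependent, iteration-dependent bound into a uniform one.

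The missing idea is the paper's actual refinement: because of the epigraph structure, a Slater point exists for \emph{every} $\eta>0$, namely $\overline{t}=C_2+2\eta$ with $\overline{x}=[\theta^0,(C_2+\eta)\mathbf{1}_m,0]$, and crucially its objective value grows only linearly in $\eta$ with slope $2$. Substituting $\overline{t}=C_2+2\eta$ into the general bound gives
\begin{equation}
F(\eta)= \frac{\eta - B^{\prime}w^{(k)}}{(2B^{\prime}+1)\eta + B^{\prime}(C_2 - f^*)},
\end{equation}
which the paper shows is increasing in $\eta$, so letting $\eta\to\infty$ yields $\sup_\eta F(\eta)=1/(2B'+1)$ and hence the uniform rate $\rho\le 1-1/(2B'+1)$, independent of $w^{(k)}$ and of the particular Slater point. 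Without this one-parameter family of Slater points and the limit $\eta\to\infty$, your argument does not reach the stated global rate.
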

\begin{proof}
Note that \eqref{opt:SIP} is equivalent to the following reformulation:
\begin{equation}\label{opt:SIP-reform}
\begin{array}{cll}
 \underset{x,t}{\textrm{min}} & t & \\ 
\textrm{s.t.} & \displaystyle f(x) - t \le 0,  & \\
&\displaystyle g_i(x,s)\le0, & \forall s\in\Xi,\;  \forall i\in[m]  \\    
& x\in X.  &
\end{array}
\end{equation} 
Treating $t$ as $x_0$ in \eqref{opt:mehrotra-SIP}, we now verify that \eqref{opt:SIP-reform} satisfies Assumption~\ref{ass:appendix} in Appendix~\ref{appendix:convg-central-cut-surf}. Since $\Theta\times\Xi$ is bounded and $h(\cdot,\cdot)$ is continuous, $\exists C_1,C_2$ such that $h(\cdot,\cdot)\in[C_1,C_2]$ on $\Theta\times\Xi$, and the objective value of \eqref{opt:DRO} is in $[C_1,C_2]$. Also, since there is no duality gap, we can set the dual objective 
$C_1\le f(x)\le C_2$ for all $x\in X$, and set $t\in [C_1,C_2]$, without affecting the optimal solution and the optimal value.
Hence, Assumption~\ref{ass:appendix} (1) is satisfied. To verify Assumption~\ref{ass:appendix} (2), we note that
 for any $\eta>0$, we can verify that $[\overline{t},\overline{x}]$ is a Slater point
of \eqref{opt:SIP-reform}, where $\overline{t}=C_2+2\eta$ and $\overline{x}=[\theta^0, (C_2+\eta){\bf 1_m}, 0]$ ($\bf 1_m$ is the $m$-dimensional vector with all entries being 1). Now we show that Assumption~\ref{ass:appendix} (3) is also satisfied. Now let us take a subgradient of $f(x)-t$ and $g_i(x,s)$ ($\forall i\in[m]$) with respect to the decision variables $[t,x]$, and use Assumption~\ref{ass:theta_convex_h_convex}, we can set the centerality parameter $B^{\prime}$ to be $B^{\prime}:=\textrm{max}\{ \sqrt{r^2_0+1+(1/m)},\; \sqrt{B^2+L^2+1} \}$, where $L=\textrm{max}\{d(s,\xi^i):\; s\in \Xi, \; i\in[m] \}$. The oracle is assumed to be given based on the prerequisites of Algorithm~\ref{alg:cutting-surface}, hence Assumption~\ref{ass:appendix} (4) is satisfied.

We now apply Theorem~\ref{thm:convg-cut-surf-mehrotra} on the semi-infinite program \eqref{opt:SIP-reform} and the corresponding master problem \eqref{opt:master}. Part (a) directly follows from Theorem~\ref{thm:convg-cut-surf-mehrotra}~(a)-(c). By Theorem~\ref{thm:convg-cut-surf-mehrotra}~(d), for $k\ge\widehat{k}$ iterations, where $w^{(\widehat{k})}<\eta/B$, we have 
\begin{equation}
\rho^{(k)} \le 1- \frac{\eta - B^{\prime}w^{(k)}}{\eta + B^{\prime}(\overline{t} - f^*)}, \label{eqn:convg-rate}
\end{equation}
where $f^*$ is the optimal value of \eqref{opt:SIP-reform}. Since \eqref{eqn:convg-rate} holds for every $\eta>0$, 
we can select an $\eta$ to maximize $\frac{\eta - B^{\prime}w^{(k)}}{\eta + B^{\prime}(\overline{t} - f^*)}$, 
hence minimizing the upper bound of $\rho^{(k)}$ in Theorem~\ref{thm:convg-cut-surf-mehrotra}.
Let $F(\eta):= \frac{\eta - B^{\prime}w^{(k)}}{\eta + B^{\prime}(\overline{t} - f^*)}$, and substitute $\overline{t}=C_2+2\eta$ in $F(\eta)$, we have 
\begin{equation}
F(\eta)= \frac{\eta - B^{\prime}w^{(k)}}{(2B^{\prime}+1)\eta + B^{\prime}(C_2 - f^*)}, \quad F^{\prime}(\eta)=\frac{B^{\prime}w^{(k)}(2B^{\prime}+1) +B^{\prime}(C_2 - f^*) }{ [(2B^{\prime}+1)\eta + B^{\prime}(C_2 - f^*) ]^2 }.
\end{equation}
Since $w^{(k)}>0$, $f^*\le C_2$, we have $F^{\prime}(\eta)>0$ for all $\eta>0$. Therefore, the maximum value of $F(\cdot)$ is:
\begin{equation}
\underset{\eta>0}{\textrm{max}}\; F(\eta) = F(\infty) = \frac{1}{2B^{\prime}+1}.
\end{equation}
It follows that the uniform rate of convergence satisfies: $\rho \le 1- \frac{1}{2B^{\prime}+1}$.
\end{proof}

\subsection{Computational tractability of the separation problem}
\label{sec:method_sep}
We now discuss the computational difficulty of solving \eqref{opt:infeasible_detection}, which depends on the function form of $h(\theta,s)$ in $s$ for a given $\theta$, the metric $d$, and the uncertainty set $\Xi$. Since $\widetilde{v}_{m+1}\ge0$ and in most applications $d(\cdot,\cdot)$ is chosen to be a vector norm, the term $-\widetilde{v}_i-\widetilde{v}_{m+1}d(s,\xi^i)$ in \eqref{opt:infeasible_detection} is concave in $s$. Therefore, in the case where $h(\theta,s)$ is concave in $s$, and $\Xi$ is a convex set, \eqref{opt:infeasible_detection} becomes a convex optimization problem. For a very general case where $h(\theta,\cdot)$ and $d(\xi^i,\cdot)$ are continuously differentiable 
over the compact (not necessarily convex) uncertainty set $\Xi$, drawing a sufficiently many independent uniform samples $s^{t}\in\Xi$ and verifying if objective value of \eqref{opt:infeasible_detection} is greater than $\varepsilon$ can either identify a violated constraint (not necessarily the most violated constraint), or conclude that the solution of the current master problem is $\varepsilon$-optimal with high probability (See \cite{mehrotra2015} Section~5.2). Furthermore, for cases where $h(\theta,\cdot)$ is a polynomial function, $d(\cdot,\cdot)$ is an Euclidean norm, and $\Xi$ is specified by polynomial inequalities (e.g., an ellipsoid), \eqref{opt:infeasible_detection} falls in to the category of polynomial optimization. The global optimal solution in such cases can be obtained by solving a sequence of SDP relaxations (a primal approach) \cite{lasserre2001_glob-opt-polynomial} or a sequence of SOS relaxations (a dual approach) \cite{parrilo2003_sdp-relax-semialg-prob}.  
  
For some important models from statistical learning such as linear regression, linear support vector machine, logistic regression, etc.,  the loss function $h$ has the form $h(\theta_0+\theta^T x, y)$, where $x$ is the feature vector and $y$ is the response value. For this case, \eqref{opt:infeasible_detection}
can be solved efficiently using a branch-and-bound scheme based on piece-wise linear approximations of $h(\theta_0+\theta^T x, y)$. For more details about this approach, see \cite{luo2017_DRO-log-svm}.

\section{Numerical Experiments}
\label{sec:num_exp}
We investigate the following Wasserstein-robust logistic regression (WRLR) model  as a numerical example to illustrate our algorithmic ideas and the performance of \eqref{opt:DRO}:
\begin{equation}
\label{opt:WRLR}
\underset{[\theta_0,\theta]\in\Theta}{\textrm{min}}\; \underset{P\in\mathcal{P}}{\textrm{max}}\; \mathbb{E}_P \Big[ \textrm{log}\big( 1+\textrm{exp}[-y(\theta_0+\theta^Tx)] \big)  \Big],  \tag{WRLR}
\end{equation}
where $x\in\mathbb{R}^n$ is the feature vector, and $y\in\{0,1\}$ is the label. The semi-infinite reformulation of \eqref{opt:WRLR} is written as follows:
\begin{equation}
\label{opt:WRLR-SIP}
\begin{aligned}
&\underset{\theta_0,\theta,v}{\textrm{min}} \;\;  \frac{1}{m}\sum^m_{i=1} v_i + r_0\cdot v_{m+1}  \\ 
&\textrm{ s.t.}\quad  \textrm{log}\left(1 + \textrm{exp}\big[-y(\theta_0 + \theta^Tx)\big] \right) - v_i - v_{m+1}\cdot d(s,\xi^i) \le 0, \; s\in\Xi, i\in[m],  \\    
&\qquad\; [\theta_0,\theta]\in\Theta, v\in\mathcal{H},    
\end{aligned}
\end{equation}
where $s=[x,y]$, and $\xi^i=[x^i,y^i]$ is the $i$th ($i\in[m]$) observed sample. We assume that only the feature vector $x$ has uncertainty but not the label $y$.  
Therefore, the uncertainty set $\Xi$ can be written as $\Xi=\Xi_0\cup\Xi_1$, where $\Xi_0$, $\Xi_1$ are the uncertainty sets for feature vectors with the label 0, 1 respectively. The uncertainty set $\Xi_0$ is defined as an $n$-dimensional hyper-rectangle such that each dimension corresponds to an interval $[\overline{x}_j \pm \sigma_j]$ for the feature $x_j$, where $\overline{x}_j$ is the sample mean of observations with label $0$, and $\sigma_j$ is the sample standard deviation. The uncertainty set $\Xi_1$ is defined similarly. We used the $l_1$ norm to define the metric $d$ on $\Xi_0$ ($\Xi_1$), i.e., $d(x,x^{\prime})=\|x - x^{\prime} \|_1$. 

\subsection{Numerical setup}
We present computational effort in solving the semi-infinite dual problem \eqref{opt:DRO-D} of \eqref{opt:WRLR} using the cutting-surface algorithm (See Section~\ref{sec:cutting-surface-alg}).  
We also present the out of sample predictive performance of the (WRLR) model compared with the ordinary logistic regression model (LR). 
The algorithm for solving  \eqref{opt:DRO-D} were implemented in C++, and the computational tests were performed on an Intel Xeon CPU with 4 GB of RAM.  
The cutting-surface algorithm for \eqref{opt:DRO-D} consists of iteratively solving the master problem (\ref{opt:master}) and the separation problem \eqref{opt:infeasible_detection}. The convex master problem (\ref{opt:master}) is solved using Ipopt 3.12.4 \cite{ipopt} which implements a primal-dual interior point method.  The separation problem \eqref{opt:infeasible_detection} is solved using the branch-and-bound scheme based on sequentially piece-wise linear approximating $h_{\theta}(u):=\textrm{log}(1+e^{-u})$, and each convex optimization subproblem is solved using CPLEX 12.6.3.
We used 11 data sets (those with less than 60 features) from the UCI machine learning repository in our computational testing,
which are: Banknote authentication (BA), Vertebral column  (VC), Pima Indians diabetes (PID), Breast cancer Wisconsin (BCW),
Statlog heart  (ST-H), EEG eye state  (EES), SPECT heart (SPT-H), Ionosphere  (ION), SPECTF heart (SPTF-H), Spambase (SPAM), Connectionist bench (CB). 
A summary of these datasets is given in Table~\ref{table:summary_dataset}. We now describe the data generation for our test problems. We chose $m$ ($m=$\;50, 75, 100, 150) observations from each of the UCI data sets. We kept the class labels of the chosen observations unchanged.
{
 \setlength{\tabcolsep}{15pt}
 \begin{table}
 \caption{Summary of data sets from UCI Machine Learning Repository}\label{table:summary_dataset}
 \centering
  \begin{tabular}{llccc}
  \hline\hline
  Data set  & Area & No. Attrib. & No. Observ.  \\
  \hline
  BA  & Finance  & 4 & 1372 \\ 
  VC  & Health care & 6 & 310 \\
  PID & Health care & 8  & 768\\
  BCW & Health care &  9 & 699 \\
  ST-H & Health care & 13 & 270 \\
  EES & Health care & 14 & 14980 \\
  SPT-H & Health care & 22 & 267 \\
  ION & Aerospace  & 34 & 351 \\
  SPTF-H & Health care & 44 & 267 \\
  SPAM & Computer   & 57 & 4601 \\ 
  CB & Aerospace  & 60 & 208 \\
  \hline
  \end{tabular}
 \end{table}
}

\subsection{Computational effort in solving WRLR}
\label{sec:comp-perform}
The computational effort in solving the semi-infinite dual problem (\ref{opt:DRO-D}) of (WRLR) for 11 data sets is given in Table~\ref{table:comp-auc} for different choices of $m$. The numbers are averaged over the 100 experiments. Columns~4-8 give the number of outer iterations in Algorithm~\ref{alg:cutting-surface}, total number of constraints (cuts)  added to the master problem at termination, the CPU time for solving a problem instance, and the percentage of time spend in solving the master and separation problems, respectively. We also provide the CPU time for solving (LR) in Column 3.  The results show that the number of calls to the master problem is approximately $4\sim 40$ when solving (WRLR). Over these calls approximately $15m\sim50m$ cutting surfaces are added. In other words, approximately $15m\sim 50m$ artificial samples are identified. For data sets with more features, the program spends a larger fraction of time on solving master problems since their scale becomes larger. The computational time of solving (LR) models is less than 1 second for data sets with feature size less than 20, and for data sets with feature size between $30\sim60$ the  computational time is less than 20 seconds. The average time of solving (WRLR) models is $\lesssim 100$ times that of solving the (LR) models.

\subsection{Predictive performance of the WRLR model}
\label{sec:forecast-perform}
We now compare the predictive performance of the (WRLR) model with the (LR) model. For each data set, we randomly select $m$ samples out of all the observed samples to train both models.  For each combination of data set and the training sample size $m$, we performed 100 experiments. Both trained (LR) and (WRLR) models are used to predict the remaining observations from the data set, and the corresponding AUC values (area under the ROC curve) are recorded. AUC value is the most popular metric used for evaluating the performance of a model used in medical literature. In each experiment, training samples are selected randomly and independently. The mean AUC values ($\overline{\textrm{AUC}}$) over 100 experiments and the standard errors  are listed in Table~\ref{table:comp-auc}. The p-values in this table are based on the hypothesis test: $H_0: \overline{\textrm{AUC}}^{\textrm{OOS}}_{\textrm{WRLR}}\le \overline{\textrm{AUC}}^{\textrm{OOS}}_{\textrm{LR}}$ versus $H_1: \overline{\textrm{AUC}}^{\textrm{OOS}}_{\textrm{WRLR}}>\overline{\textrm{AUC}}^{\textrm{OOS}}_{\textrm{LR}}$, where $\overline{\textrm{AUC}}^{\textrm{OOS}}_{\textrm{WRLR}}$ and $\overline{\textrm{AUC}}^{\textrm{OOS}}_{\textrm{LR}}$ denote the out of sample mean AUC values corresponding to (WRLR) and (LR), respectively. Statistically, the smaller the p-value, the more likely $H_1$ is true.

To train the (WRLR) model, one needs to specify the radius $r_0$ of the Wasserstein ball.  One way to determine this radius is to use the concentration inequality $\textrm{Pr}\big(\mathcal{W}(P_{\textrm{true}}, P_0)\le r_0\big)\ge 1-\gamma$. The theoretical bounds in \cite{fournier2014_rate-convg-wass} are of limited value. Instead we used six candidate empirical Wasserstein radii: $\{0, 0.01, 0.05, 0.1, 0.5, 1\}$\footnote{Note that with $r_0=0$, the (WRLR) reduces to the (LR) model.} and a cross-validation approach to select the best $r_0$ from these. Specifically, we used the following $4$-fold cross-validation approach: we divided $m$ training samples into 4 subsets and used any three of them to train WRLR with every candidate value of $r_0$ and tested the model on the remaining subset. We finally picked the $r_0$ value corresponding to the best mean AUC  value over 4 folds. Once this $r_0$ is selected, it is used for out of sample testing on the remaining observations that is not part of the chosen $m$ samples.

The comparison shows that the quanity $(\overline{\textrm{AUC}}^{\textrm{OOS}}_{\textrm{WRLR}} - \overline{\textrm{AUC}}^{\textrm{OOS}}_{\textrm{LR}})$ ranges from -.0462 to .1005 and the relative difference ranges from -.3791 to .7122, in the studied cases. We observe that  $\overline{\textrm{AUC}}^{\textrm{OOS}}_{\textrm{WRLR}}$ is greater than $\overline{\textrm{AUC}}^{\textrm{OOS}}_{\textrm{LR}}$ in 34 ($77\%$) cases out of all 44 cases. The standard errors associated with (WRLR) are smaller than that of (LR) in 31 ($72\%$) cases. This suggests that not only the distributionally-robust model is better, its performance is also more stable. It is seen from the p-values at the significance level $\alpha=0.05$, (WRLR) outperformances (LR) in 24 ($55\%$) cases which are from
 seven data sets: BA, BCW, ST-H, SPT-H, ION, SPTF-H and SPAM.  For 7 ($16\%$) cases which are from data sets: VC, PID and EES, $\overline{\textrm{AUC}}^{\textrm{OOS}}_{\textrm{WRLR}}$ is significantly smaller than $\overline{\textrm{AUC}}^{\textrm{OOS}}_{\textrm{LR}}$, indicating (WRLR) is not as good as (LR) for these three data sets. For the remaining 13 ($29\%$) cases that are from data sets: PID, ST-H, EES, SPT-H, SPAM and CB, the difference in mean AUC is not statistically significant.

\begin{landscape}

\begin{ThreePartTable}
	\begin{TableNotes} \scriptsize  
		\item[1] Number of outer iterations described in Algorithm~\ref{alg:cutting-surface}.
		\item[2] Fraction of time spent on solving master problems described in Algorithm~\ref{alg:cutting-surface}.
		\item[3] Fraction of time spent on solving separation problems described in Algorithm~\ref{alg:cutting-surface}.
		\item[4] Defined as $(\overline{\textrm{AUC}}_{\textrm{WRLR}} - \overline{\textrm{AUC}}_{\textrm{LR}})/(1- \overline{\textrm{AUC}}_{\textrm{LR}} )$.
	\end{TableNotes}
{
\scriptsize
\setlength{\tabcolsep}{4pt}
\def\arraystretch{1.2}
\begin{center}
\begin{longtable}{@{\extracolsep{0pt}}cc|cccccc|ccccccc}
\caption{Computational performance of solving WRLR and predictive performance of WRLR compared with LR  for 11 data sets. Listed values are average of 100 experiments.} \label{table:comp-auc} \\
\hline\hline
 &  & \multicolumn{6}{c|}{Computational Performance} & \multicolumn{7}{c}{Predictive Performance} \\
 \cline{3-8} \cline{9-15}  
   &  & LR & \multicolumn{5}{c}{WRLR} & \multicolumn{2}{c}{LR} & \multicolumn{2}{c}{WRLR}   \\   
 \cline{4-8}  \cline{9-9} \cline{11-12} 
 Data set & $m$ & CPU [sec] & No. main iters.\tnote{1} & No. cuts  & CPU [sec] & Master\tnote{2} ($\%$) & Sep.\tnote{3} ($\%$) & Mean AUC & S.E. & Mean AUC & S.E. & Diff.  & Rel. Diff.\tnote{4} & p-value \\
 \hline
\endfirsthead

\multicolumn{7}{c}{{\bfseries \tablename\ \thetable{} -- continued from previous page}} \\
\hline\hline
 &  & \multicolumn{6}{c|}{Computational Performance} & \multicolumn{7}{c}{Predictive Performance} \\
 \cline{3-8} \cline{9-15}  
   &  & LR & \multicolumn{5}{c}{WRLR} & \multicolumn{2}{c}{LR} & \multicolumn{2}{c}{WRLR}   \\
 \cline{4-8}  \cline{9-9} \cline{11-12} 
Data set & $m$ & CPU [sec] & No. main iters.\tnote{1} & No. cuts  & CPU [sec] & Master\tnote{2} ($\%$) & Sep.\tnote{3} ($\%$) & Mean AUC & S.E. & Mean AUC & S.E. & Diff.  & Rel. Diff.\tnote{4} & p-value \\
 \hline
\endhead

\hline
\multicolumn{9}{r}{{Continued on the next page}} \\
\hline
\endfoot

\hline\hline 
\insertTableNotes
\endlastfoot

BA	&	50	&	0.022	&	3.8	&	66.9	&	1.21	&	13.74	&	86.26	&	.9985	&	.0002	&	.9991	&	.0001	&	.0006	&	.4202	&	.0062	\\
	&	75	&	0.025	&	4.3	&	90.8	&	0.86	&	17.96	&	82.04	&	.9985	&	.0001	&	.9994	&	.0000	&	.0009	&	.5775	&	.0000	\\
	&	100	&	0.032	&	3.9	&	116.7	&	1.83	&	13.89	&	86.11	&	.9993	&	.0000	&	.9995	&	.0000	&	.0002	&	.2724	&	.0001	\\
	&	150	&	0.042	&	4.6	&	157.5	&	2.30	&	14.42	&	85.58	&	.9996	&	.0000	&	.9997	&	.0000	&	.0000	&	.1179	&	.0000	\\
	\hline																												
VC	&	50	&	0.021	&	7.2	&	195	&	1.65	&	30.51	&	69.49	&	.8782	&	.0025	&	.8320	&	.0034	&	-.0462	&	-.3791	&	1.0000	\\
	&	75	&	0.032	&	7.1	&	271.3	&	1.82	&	29.15	&	70.85	&	.8867	&	.0022	&	.8441	&	.0029	&	-.0426	&	-.3755	&	1.0000	\\
	&	100	&	0.047	&	6.6	&	323.3	&	3.73	&	32.44	&	67.56	&	.8887	&	.0023	&	.8504	&	.0025	&	-.0383	&	-.3442	&	1.0000	\\
	&	150	&	0.064	&	6.5	&	541.5	&	5.19	&	30.63	&	69.37	&	.8951	&	.0023	&	.8601	&	.0029	&	-.0350	&	-.3337	&	1.0000	\\
	\hline																												
PID	&	50	&	0.032	&	7.5	&	203.7	&	4.19	&	22.20	&	77.80	&	.7542	&	.0058	&	.7564	&	.0037	&	.0022	&	.0089	&	.3756	\\
	&	75	&	0.044	&	6.2	&	244	&	5.15	&	19.78	&	80.22	&	.8009	&	.0020	&	.8000	&	.0017	&	-.0009	&	-.0043	&	.6281	\\
	&	100	&	0.05	&	7.5	&	240.4	&	3.85	&	13.34	&	86.66	&	.7880	&	.0025	&	.7840	&	.0019	&	-.0041	&	-.0192	&	.9034	\\
	&	150	&	0.087	&	6	&	403.9	&	9.73	&	19.01	&	80.99	&	.8189	&	.0011	&	.8084	&	.0020	&	-.0105	&	-.0577	&	1.0000	\\
	\hline																												
BCW	&	50	&	0.051	&	8.6	&	251.8	&	6.19	&	31.44	&	68.56	&	.9716	&	.0020	&	.9916	&	.0004	&	.0200	&	.7040	&	.0000	\\
	&	75	&	0.076	&	9.4	&	284	&	7.88	&	27.48	&	72.52	&	.9773	&	.0016	&	.9886	&	.0010	&	.0112	&	.4954	&	.0000	\\
	&	100	&	0.097	&	8.9	&	501.4	&	12.84	&	34.28	&	65.72	&	.9790	&	.0025	&	.9940	&	.0001	&	.0150	&	.7122	&	.0000	\\
	&	150	&	0.124	&	9.6	&	786.1	&	27.31	&	41.79	&	58.21	&	.9889	&	.0007	&	.9945	&	.0001	&	.0056	&	.5049	&	.0000	\\
	\hline																												
ST-H	&	50	&	0.069	&	11.7	&	242.7	&	12.18	&	30.56	&	69.44	&	.8317	&	.0035	&	.8808	&	.0029	&	.0490	&	.2914	&	.0000	\\
	&	75	&	0.159	&	8.5	&	321.9	&	13.02	&	27.58	&	72.42	&	.8504	&	.0036	&	.8903	&	.0018	&	.0398	&	.2664	&	.0000	\\
	&	100	&	0.147	&	9.6	&	381.5	&	10.50	&	24.15	&	75.85	&	.8945	&	.0022	&	.9064	&	.0011	&	.0120	&	.1133	&	.0000	\\
	&	150	&	0.193	&	8.1	&	472.5	&	21.14	&	29.02	&	70.98	&	.8986	&	.0017	&	.8990	&	.0017	&	.0004	&	.0042	&	.4319	\\
	\hline																												
EES	&	50	&	0.041	&	15.7	&	251.9	&	7.80	&	37.72	&	62.28	&	.5874	&	.0037	&	.5902	&	.0035	&	.0029	&	.0070	&	.2877	\\
	&	75	&	0.079	&	12.1	&	435	&	11.11	&	37.37	&	62.63	&	.6095	&	.0038	&	.5996	&	.0041	&	-.0099	&	-.0252	&	.9602	\\
	&	100	&	0.322	&	11.9	&	577.8	&	17.78	&	36.55	&	63.45	&	.6221	&	.0033	&	.6175	&	.0032	&	-.0047	&	-.0124	&	.8459	\\
	&	150	&	0.383	&	8.5	&	737.8	&	31.56	&	41.78	&	58.22	&	.6245	&	.0023	&	.6139	&	.0026	&	-.0106	&	-.0282	&	.9987	\\
	\hline																												
SPT-H	&	50	&	0.241	&	21.5	&	938.8	&	38.87	&	88.91	&	11.09	&	.8126	&	.0016	&	.8176	&	.0018	&	.0050	&	.0269	&	.0203	\\
	&	75	&	0.343	&	24.5	&	1031.2	&	53.43	&	83.40	&	16.60	&	.8221	&	.0021	&	.8311	&	.0030	&	.0089	&	.0501	&	.0078	\\
	&	100	&	0.54	&	19.4	&	1122.5	&	63.08	&	83.44	&	16.56	&	.8311	&	.0022	&	.8370	&	.0037	&	.0058	&	.0346	&	.0866	\\
	&	150	&	0.987	&	13.4	&	1384	&	49.70	&	77.91	&	22.09	&	.8301	&	.0029	&	.8567	&	.0033	&	.0267	&	.1569	&	.0000	\\
	\hline																												
ION	&	50	&	0.912	&	34.7	&	740.6	&	173.37	&	63.56	&	36.44	&	.8429	&	.0024	&	.8708	&	.0021	&	.0279	&	.1775	&	.0000	\\
	&	75	&	2.415	&	28.2	&	1208.5	&	229.49	&	62.95	&	37.05	&	.8582	&	.0031	&	.8919	&	.0018	&	.0338	&	.2381	&	.0000	\\
	&	100	&	1.51	&	23.1	&	1502.5	&	441.58	&	68.51	&	31.49	&	.8606	&	.0029	&	.8967	&	.0018	&	.0360	&	.2584	&	.0000	\\
	&	150	&	2.774	&	17.8	&	1718.3	&	386.63	&	61.30	&	38.70	&	.8715	&	.0024	&	.9006	&	.0021	&	.0291	&	.2264	&	.0000	\\
	\hline																												
SPTF-H	&	50	&	2.144	&	38.9	&	438.1	&	62.68	&	70.38	&	29.62	&	.6818	&	.0048	&	.7517	&	.0056	&	.0700	&	.2198	&	.0000	\\
	&	75	&	3.718	&	31.6	&	791.4	&	112.66	&	74.41	&	25.59	&	.7255	&	.0045	&	.8260	&	.0031	&	.1005	&	.3661	&	.0000	\\
	&	100	&	3.453	&	29	&	983.7	&	144.66	&	75.50	&	24.50	&	.7638	&	.0033	&	.8368	&	.0027	&	.0730	&	.3091	&	.0000	\\
	&	150	&	5.694	&	23.7	&	1327.6	&	221.44	&	77.87	&	22.13	&	.8142	&	.0039	&	.8632	&	.0037	&	.0490	&	.2638	&	.0000	\\
	\hline																												
SPAM	&	50	&	3.655	&	50.5	&	538.2	&	266.40	&	66.10	&	33.90	&	.8969	&	.0012	&	.8984	&	.0010	&	.0015	&	.0142	&	.1796	\\
	&	75	&	4.76499	&	46.9	&	918.8	&	422.79	&	60.15	&	39.85	&	.9219	&	.0014	&	.9249	&	.0019	&	.0030	&	.0386	&	.0998	\\
	&	100	&	11.065	&	40.5	&	1368	&	409.94	&	65.66	&	34.34	&	.9164	&	.0024	&	.9331	&	.0008	&	.0166	&	.1992	&	.0000	\\
	&	150	&	15.76	&	36.8	&	1989.7	&	1414.10	&	75.31	&	24.69	&	.9346	&	.0010	&	.9387	&	.0007	&	.0041	&	.0630	&	.0005	\\
	\hline																												
CB	&	50	&	5.59802	&	44.6	&	420.4	&	120.13	&	42.30	&	57.70	&	.8081	&	.0043	&	.8136	&	.0033	&	.0055	&	.0286	&	.1549	\\
	&	75	&	9.23099	&	45.7	&	601.2	&	228.41	&	53.83	&	46.17	&	.8105	&	.0027	&	.8120	&	.0016	&	.0015	&	.0080	&	.3138	\\
	&	100	&	10.67	&	43.7	&	763.5	&	203.12	&	51.79	&	48.21	&	.8218	&	.0037	&	.8228	&	.0043	&	.0010	&	.0056	&	.4298	\\
	&	150	&	19.887	&	39.4	&	1362	&	570.95	&	60.77	&	39.23	&	.8405	&	.0061	&	.8461	&	.0062	&	.0056	&	.0352	&	.2597	

\end{longtable}
\end{center}
}
\end{ThreePartTable}

\end{landscape}

\section{Concluding Remarks}
The computational results presented in this paper used a Wasserstein-robust framework for the logistic regression model, 
where all the decision variables are continuous, and the feasible set is assumed to be convex. The modified exchange algorithm of 
this paper is applicable to a broad class of nonlinear optimization problems. These algorithms can be implemented further within a 
branch-and-bound framework. Since the decomposition framework for solving the dual of \eqref{opt:DRO} is suitable for possibly mixed-integer decision 
variables and model parameters, this framework can also be adapted to model and solve \eqref{opt:DRO} application
problems such as those arising in scheduling, logistics and supply chain management.

\section*{Acknowledgement}
We are grateful to Changhyeok Lee and Liwei Zeng for a valuable discussion. We also acknowledge NSF grants CMMI-1362003
and CMMI-1100868 that were used to support this research.

\bibliographystyle{siamplain}
\bibliography{reference-database}

\begin{thebibliography}{10}

\bibitem{barrio1999_CLT-wass}
{\sc E.~D. Barrio, E.~Gine, and C.~Matran}, {\em Central limit theorems for the
  wasserstein distance between the empirical and the true distributions}, The
  Annals of Probability, 27 (1999), pp.~1009--1071.

\bibitem{book_robust_opt2009}
{\sc A.~Ben-Tal, L.~E. Ghaoui, and A.~Nemirovski}, {\em Robust Optimization},
  Princeton Series in Applied Mathematics, Princeton University Press, August
  2009.

\bibitem{ben-tal2013_rob-opt-uncert-prob}
{\sc A.~Ben-Tal, D.~Hertog, A.~D. Waegenaere, B.~Melenberg, and G.~Rennen},
  {\em Robust solutions of optimization problems affected by uncertain
  probabilities}, Management Science, 59 (2013), pp.~341--357.

\bibitem{Ben-Tal-1998_rob-conv-opt}
{\sc A.~Ben-Tal and A.~Nemirovski}, {\em Robust convex optimization},
  Mathematics of Operations Research, 23 (1998), pp.~769--805.

\bibitem{Ben-Tal-1999_rob-lin-prog}
{\sc A.~Ben-Tal and A.~Nemirovski}, {\em Robust solutions to uncertain linear
  programs}, Operations Research Letters, 25 (1999), pp.~1--13.

\bibitem{Ben-Tal2002_rob-opt-meth-app}
{\sc A.~Ben-Tal and A.~Nemirovski}, {\em Robust optimization - methodology and
  applications}, Mathematical Programming, 92 (2002), pp.~453--480.

\bibitem{Ben-Tal2002_rob-sol-QCQP}
{\sc A.~Ben-Tal, A.~Nemirovski, and C.~Roos}, {\em Robust solutions of
  uncertain quadratic and conic-quadratic problems}, SIAM J. OPTIM., 13 (2002),
  pp.~535--560.

\bibitem{bertsekas2003_conv-anal-opt}
{\sc D.~P. Bertsekas, A.~Nedi{\'c}, and A.~E. Ozdaglar}, {\em Convex Analysis
  and Optimization}, Athena Scientific Optimization and Computation, Athena
  Scientific, April 2003.

\bibitem{bertsimas2010_models-minmax-stoch-LP-risk-aver}
{\sc D.~Bertsimas, X.~V. Doan, K.~Natarajan, and C.~Teo}, {\em Models for
  minimax stochastic linear optimization problems with risk aversion}, Math.
  Oper. Res., 35 (2010), pp.~580--602.

\bibitem{bertsimas2013_data-driven-rob-opt}
{\sc D.~Bertsimas, V.~Gupta, and N.~Kallus}, {\em Data-driven robust
  optimization}, 2013.
\newblock \url{ https://arxiv.org/pdf/1401.0212v2.pdf }.

\bibitem{bertsimas2005_opt-ineq-prob}
{\sc D.~Bertsimas and I.~Popescu}, {\em Optimal inequalities in probability
  theory: A convex optimization approach}, SIAM J. OPTM., 15 (2005),
  pp.~780--804.

\bibitem{birge1987}
{\sc J.~R. Birge and R.~J.-B. Wets}, {\em Computing bounds for stochastic
  programming problems by means of a generalized moment problem}, Mathematics
  of Operations Research, 12(1) (1987), pp.~149--162.

\bibitem{calafiore2007_amb-risk-meas-opt-rob-portf}
{\sc G.~C. Calafiore}, {\em Ambiguous risk measures and optimal robust
  portfolios}, SIAM J. OPTIM., 18 (2007), pp.~853--877.

\bibitem{calafiore2006_DRO-chance-constr-LP}
{\sc G.~C. Calafiore and L.~E. Ghaoui}, {\em On distributionally robust
  chance-constrained linear programs}, J. Optim. Theory Appl., 130 (2006),
  pp.~1--22.

\bibitem{charnes1962_duality-haar-progs-finite-seq-space}
{\sc A.~Charnes, W.~W. Cooper, and K.~O. Kortanek}, {\em Duality, haar programs
  and finite sequence spaces}, Proceedings of the National Academy of Science,
  48 (1962), pp.~783--786.

\bibitem{charnes1963_duality-SIP}
{\sc A.~Charnes, W.~W. Cooper, and K.~O. Kortanek}, {\em Duality in
  semi-infinite programs and some works of haar and carath{\'e}odory},
  Management Science, 9 (1963), pp.~209--228.

\bibitem{charnes1963_SIP-thy-KKT}
{\sc A.~Charnes, W.~W. Cooper, and K.~O. Kortanek}, {\em On the theory of
  semi-infinite programming and a generalization of the kuhn-tucker saddle
  point theorem for arbitrary convex functions}, Naval Research Logistics
  Quarterly, 16 (1969), pp.~41--51.

\bibitem{delage2010_distr-rob-opt-mom-uncer}
{\sc E.~Delage and Y.~Ye}, {\em Distributional robust optimization under moment
  uncertainty with application to data-driven problems}, Operations Research,
  58 (2010).

\bibitem{dupacova1987_minmax-stoch}
{\sc J.~Dupa{\v c}ov{\'a}}, {\em The minimax approach to stochastic programming
  and an illustrative application}, Stochastics, 20 (1987), pp.~73--88.

\bibitem{erdogan2006_ambig-chance-constr-rob-opt}
{\sc E.~Erdo{\u g}an and G.~Iyengar}, {\em Ambiguous chance constrained
  problems and robust optimization}, Mathematical Programming, 107 (2006),
  pp.~37--61.

\bibitem{esfahani2015_distr-rob-wass-metric}
{\sc P.~M. Esfahan and D.~Kuhn}, {\em Data-driven distributionally robust
  optimization using the Wasserstein metric: performance guarantees and
  tractable reformulatioins}, 2015.
\newblock \url{ https://arxiv.org/pdf/1505.05116.pdf }.

\bibitem{fournier2014_rate-convg-wass}
{\sc N.~Fournier and A.~Guillin}, {\em On the rate of convergence in
  wasserstein distance of the empirical measure}, Probab. Theory Relat. Fields,
  162 (2015), pp.~707--738.

\bibitem{givens1984}
{\sc C.~R. Givens and R.~M. Shortt}, {\em A class of wasserstein metrics for
  probability distributions}, Michigan Math. J., 31 (1984), pp.~231--240.

\bibitem{goh2010_DRO-tract-approx}
{\sc J.~Goh and M.~Sim}, {\em Distributionally robust optimization and its
  tractable approximations}, Oper. Res., 58 (2010), pp.~902--917.

\bibitem{haar1924_uber-lin-ung}
{\sc A.~Haar}, {\em {\" U}her linear ungleichungen}, Acta Mathematica Szeged, 2
  (1924).

\bibitem{hanasusanto2015_DRO-uncert-quant-chance-constr-prog}
{\sc G.~Hanasusanto, V.~Roitch, D.~Kuhn, and W.~Wiesemann}, {\em A
  distributionally robust perspective on uncertainty quantification and chance
  constrained programming}, Mathematical Programming, 151 (2015), pp.~35--62.

\bibitem{hettich1977_first-sec-order-cond-local-opt-finite-dim}
{\sc R.~Hettich and H.~T. Jongen}, {\em On first and second order conditions
  for local optima for optimization problems in finite dimensions}, Methods
  Oper. Res., 23 (1977), pp.~82--97.

\bibitem{hettich1978_SIP-opt-cond-appl}
{\sc R.~Hettich and H.~T. Jongen}, {\em Semi-infinite programming: conditions
  of optimality and applications}, in Optimization Techniques, Lecture Notes in
  Control and Inform. Sci., J.~Stoer, ed., Springer-Verlag, Berlin, Heidelberg,
  New York, 1977, ch.~7, pp.~82--97.

\bibitem{hettich1993_SIP-thy-methd-appl}
{\sc R.~Hettich and K.~O. Kortanek}, {\em Semi-infinite programming: theory,
  methods, and applications}, SIAM REVIEW, 35 (1993), pp.~380--429.

\bibitem{hettich1995_sec-ord-opt-cond-gen-SIP}
{\sc R.~Hettich and G.~Still}, {\em Second order optimality conditions for
  generalized semi-infinite programming problems}, Optimization, 34 (1995),
  pp.~195--211.

\bibitem{jaillet2016_routing-opt-under-uncert}
{\sc P.~Jaillet, J.~Qi, and M.~Sim}, {\em Routing optimization under
  uncertainty}, Oper. Res., 64 (2016), pp.~186--200.

\bibitem{jiang2015_risk-aver-two-stage-stoch-prog-distr-amb}
{\sc R.~Jiang and Y.~Guan}, {\em Risk-averse two-stage stochastic program with
  distributional ambiguity}, 2015.
\newblock \url{ https://www.optimization-online.org/DB_FILE/2015/05/4908.pdf }.

\bibitem{jiang2016_data-driven-chance-constr-stoch-program}
{\sc R.~Jiang and Y.~Guan}, {\em Data-driven chance constrained stochastic
  program}, Mathematical Programming, 158 (2016), pp.~291--327.

\bibitem{kortanek1993}
{\sc K.~O. Kortanek and H.~No}, {\em A central cutting plane algorithm for
  convex semi-infinite programming problems}, SIAM J. OPTM, 3 (1993),
  pp.~901--918.

\bibitem{lasserre2001_glob-opt-polynomial}
{\sc J.~B. Lasserre}, {\em Global optimization with polynomials and the problem
  of moments}, SIAM J. OPTIM., 11 (2001), pp.~796--817.

\bibitem{Lee2015_distr-rob-svm}
{\sc C.~Lee and S.~Mehrotra}, {\em A Distributionally-robust Approach for
  Finding Support Vector Machines}, 2015.
\newblock eprints for the optimization community, \url{
  http://www.optimization-online.org/DB_HTML/2015/06/4965.html }.

\bibitem{liao2013_DR-workforc-scheduling-call-center-uncert-arriv-rate}
{\sc S.~Liao, C.~Delft, and J.~P. Vial}, {\em Distributionally robust workforce
  scheduling in call centers with uncertain arrival rates}, Optim. Methods
  Softw., 28 (2013), pp.~501--522.

\bibitem{lin2014_new-exact-penalty-methd-SIP}
{\sc Q.~Lin, R.~Loxton, K.~L. Teo, Y.~H. Wu, and C.~J. Yu}, {\em A new exact
  penalty method for semi-infinite programming problems}, J. Comp. Appl. Math.,
  261 (2014), pp.~271--286.

\bibitem{long2014_DR-discrete-opt-entrop-VaR}
{\sc D.~Z. Long and J.~Qi}, {\em Distributionally robust discrete optimization
  with entropic value-at-risk}, Oper. Res. Lett., 42 (2014), pp.~532--538.

\bibitem{lopez2007}
{\sc M.~L{\'o}pez and G.~Still}, {\em Semi-infinite programming}, European
  Journal of Operational Research, 180 (2007), pp.~491--518.

\bibitem{love2016_phi-diverg-constr-amb-stoch-prog-data-driven-opt}
{\sc D.~K. Love and G.~Bayraksan}, {\em Phi-divergence constrained ambiguous
  stochastic programs for data-driven optimization}, 2016.
\newblock \url{http://www.optimization-online.org/DB_HTML/2016/03/5350.html }.

\bibitem{luo2017_DRO-log-svm}
{\sc F.~Q. Luo, C.~Lee, and S.~Mehrotra}, {\em Application of distributionally
  robust optimization with wasserstein metric on logistic regression and
  support vector machine}, tech. report., Northwestern University, 2017.

\bibitem{mehrotra2015}
{\sc S.~Mehrotra and D.~Papp}, {\em A cutting surface algorithm for
  semi-infinite convex programming with an application to moment robust
  optimization}, SIAM J. OPTM, 24 (2015), pp.~1670--1697.

\bibitem{mehrotra2014_model-algthm-distr-rob-least-square}
{\sc S.~Mehrotra and H.~Zhang}, {\em Models and algorithms for distributionally
  robust least squares problems}, Mathematical Programming, 146 (2014),
  pp.~123--141.

\bibitem{natarajan2008_incorp-asym-distr-inf-rob-VaR-opt}
{\sc K.~Natarajan, D.~Pachamanova, and M.~Sim}, {\em Incorporating asymmetric
  distributional information in robust value-at-risk optimization}, Manag.
  Sci., 54 (2008), pp.~573--585.

\bibitem{nuernberger1985_global-unicity-opt-approx}
{\sc G.~Nuernberger}, {\em Global unicity in optimization and approximation},
  Z. Angew. Math. Mech., 65 (1985), pp.~T319--T321.

\bibitem{nuernberger1985_global-unicity-SIP}
{\sc G.~Nuernberger}, {\em Global unicity in semi-infinite optimization},
  Numer. Funct. Anal. Optic., 8 (1985), pp.~173--191.

\bibitem{parrilo2003_sdp-relax-semialg-prob}
{\sc P.~A. Parrilo}, {\em Semidefinite programming relaxations for
  semialgebraic problems}, Math. Program. Ser. B, 96 (2003), pp.~293--320.

\bibitem{pflug2012_1-over-N-invest-optimal-under-ambiguity}
{\sc G.~C. Pflug, A.~Pichler, and D.~Wozabal}, {\em Then 1/n investment
  strategy is optimal under high model ambiguity}, J. Bank. Financ., 36 (2012),
  pp.~410--417.

\bibitem{pflug2007_ambig-in-portfolio-selection}
{\sc G.~Plug and D.~Wozabal}, {\em Ambiguity in portfolio selection},
  Quantitative Finance, 7 (2007), pp.~435--442.

\bibitem{postek2016_comp-tract-counterparts-DRO-risk-measures}
{\sc K.~Postek, D.~Hertog, and B.~Melenberg}, {\em Computationally tractable
  counterparts of distributionally robust constraints on risk measures}, SIAM
  REV., 58 (2016), pp.~603--650.

\bibitem{book_stoch_prog_1995}
{\sc A.~Pr{\'e}kopa}, {\em Stochastic Programming}, vol.~324 of Mathematics and
  Its Applications, Springer Netherlands, 1995.

\bibitem{reemtsen1998_num-methd-SIP-survey}
{\sc R.~Reemtsen and S.~G{\"o}rner}, {\em Numerical methods for semi-infinite
  programming: A survey}, in Semi-infinite Programming, Nonconvex Optimization
  and Its Applications, R.~Reemtsen and J.~J. R{\"u}ckmann, eds., Kluwer
  Boston, Boston, 1998, ch.~25, pp.~195--275.

\bibitem{s-abadeh2015_distr-rob-log-reg}
{\sc S.~Shafieezadeh-Abadeh, P.~M. Esfahani, and D.~Kuhn}, {\em Distributional
  robust logistic regression}.
\newblock arXiv:1509.09259, 2015.

\bibitem{shapiro2001_conic-lp}
{\sc A.~Shapiro}, {\em On duality theory of conic linear problems}, in
  Semi-Infinite Programming, vol.~57 of Nonconvex Optimization and Its
  Applications, Springer US, 2001, pp.~135--165.

\bibitem{shapiro2004_class-of-minimax-analysis-stoch-progs}
{\sc A.~Shapiro and S.~Ahmed}, {\em On a class of minimax stochastic programs},
  SIAM J. OPTIM., 14 (2004), pp.~1237--1249.

\bibitem{shapiro2002_minimax-analysis-stoch-probs}
{\sc A.~Shapiro and A.~Kleywegt}, {\em Minimax analysis of stochastic
  problems}, Optimization Methods and Software, 17 (2002), pp.~523--542.

\bibitem{still1999_gen-SIP-thy-methd}
{\sc G.~Still}, {\em Generalized semi-infinite programming: theory and
  methods}, European Journal of Operational Research, 119 (1999), pp.~301--313.

\bibitem{ipopt}
{\sc A.~W{\"a}chter and L.~T. Biegler}, {\em On the implementation of a
  primal-dual interior point filter line search algorithm for large-scale
  nonlinear programming}, Mathematical Programming, 106 (2006), pp.~25--57.

\bibitem{wang2015_feasible-methd-SIP}
{\sc S.~Wang and Y.~Yuan}, {\em Feasible method for semi-infinite programs},
  SIAM J. OPTM, 25 (2015), pp.~2537--2560.

\bibitem{wang2016_likehd-rob-opt-data-driven}
{\sc Z.~Wang, P.~W. Glynn, and Y.~Ye}, {\em Likelihood robust optimization for
  data-driven problems}, Comput. Manag. Sci., 13 (2016), pp.~241--261.

\bibitem{wiesemann2015_dist-rob-conv-opt}
{\sc W.~Wiesemann, D.~Kuhn, and M.~Sim}, {\em Distributional robust convex
  optimization}, Operations Research, 62 (2015), pp.~1358--1376.

\bibitem{wozabal2012_framework-opt-under-ambiguity}
{\sc D.~Wozabal}, {\em A framework for optimization under ambiguity}, Annals of
  Operations Research, 193 (2012), pp.~21--47.

\bibitem{xu2012-DR-interpret-rob-opt}
{\sc H.~Xu, C.~Caramanis, and S.~Mannor}, {\em A distributional interpretation
  of robust optimization}, Mathematics of Operations Research, 37 (2012),
  pp.~95--110.

\bibitem{xu2014_solving-SIP-smooth-proj-grad-methd}
{\sc M.~Xu, S.~Y. Wu, and J.~J. Ye}, {\em Solving semi-infinite programs by
  smoothing projected gradient method}, Comp. Optim. Appl., 59 (2014),
  pp.~591--616.

\bibitem{yang2015_opt-cond-SIP-lp-exact-penalty-func}
{\sc X.~Q. Yang, Z.~Y. Chen, and J.~C. Zhou}, {\em Optimality conditions for
  semi-infinite and generalized semi-infinite programs via $l_p$ exact penalty
  functions}, 2015.
\newblock \url{ http://www.mypolyuweb.hk/~mayangxq/2015YCZ.pdf}.

\bibitem{yanikoglu2013_safe-approx-amb-constr-hist-data}
{\sc {\. I}.~Yan{\i}ko{\u g}lu and D.~Hertog}, {\em Safe approximations of
  ambiguous chance constraints using historical data}, INFORMS J. Comput., 25
  (2013), pp.~666--681.

\bibitem{zhang2016_DRO-two-stage-lot-sizing}
{\sc Y.~Zhang, Z.~M. Shen, and S.~Song}, {\em Distributionally robust
  optimization of two-stage lot-sizing problems}, Prod. Oper. Manag., 25
  (2016), pp.~2116--2131.

\bibitem{Zweig1993_ROC-rev}
{\sc M.~H. Zweig and G.~Campbell}, {\em Receiver-operating characteristic (roc)
  plots: a fundamental evaluation tool in clinical medicine}, Clin Chem., 39
  (1993), pp.~561--577.

\bibitem{zymler2013-DR-joint-chan-constr-sec-ord-mom-inf}
{\sc S.~Zymler, D.~Kuhn, and B.~Rustem}, {\em Distributional robust joint
  chance constraints with second-order moment information}, Math. Programming,
  137 (2013), pp.~167--198.

\end{thebibliography}

\appendix

\section{Proofs and supplements for Sections~\ref{sec:reform} and \ref{sec:alg-SDP}}
\label{appendix1}
\subsection*{Proof of Proposition~\ref{lem:fP_continuity_over_P}}
\label{appendix:fcontP}
\begin{proof}
By the definition of  Wasserstein metric in (\ref{def:Kantorovich_metric}), if two probability measures $P_1$ and $P_2$ satisfying $\W(P_1,P_2)< \varepsilon$,  $\varepsilon >0$, then there exists a $K \in \mathcal{M}(\Xi\times \Xi, \mathcal{F}\times \mathcal{F})$ such that 
\begin{align}
&K(A\times\Xi) = P_1(A), \quad K(\Xi\times A) = P_2(A), \quad \forall A\in\mathcal{F}\label{eqn:prop_K_P1_P2}\\
&\int_{(s_1\times s_2)\Xi\times\Xi} d\big(s_1,s_2\big) K(ds_1\times ds_2) \le \varepsilon.  \label{eqn:prop_int_K_le_varepsilon}
\end{align}
Therefore, we have 
\begin{equation}
	\begin{aligned}
		&|f(P_1)-f(P_2)|=\left\lvert \int_{s_1\in\Xi}h(\theta,s_1)P_1(ds_1) - \int_{s_2\in\Xi}h(\theta,s_2)P_2(d s_2) \right\rvert \\
		&=\left\lvert \int_{s_1\in\Xi}h(\theta,s_1)K(ds_1\times\Xi) - \int_{s_2\in\Xi}h(\theta,s_2)K(\Xi\times ds_2)  \right\rvert \\
		&=\left\lvert \int_{(s_1\times s_2)\in\Xi\times\Xi}h(\theta,s_1)K(ds_1\times ds_2) - \int_{(s_1\times s_2)\in\Xi\times\Xi}h(\theta,s_2)K(ds_1 \times ds_2) \right\rvert \\
		&\le \int_{(s_1\times s_2)\in\Xi\times\Xi} \lvert h(\theta,s_1) - h(\theta,s_2) \rvert K(ds_1\times ds_2)  \\
		&\le C(\theta) \int_{(s_1\times s_2)\in\Xi\times\Xi} d(s_1,s_2)  K(ds_1\times ds_2) \\
		&\le C(\theta)\varepsilon,
	\end{aligned}
\end{equation}
which shows that $f$ is continuous in $(\mathcal{M}(\Xi,\mathcal{F}),\W)$.
\end{proof}

\subsection*{Proof of Proposition~\ref{prop:int_d_K_zero}}
\label{appendix:P_converge}
\begin{proof}
We divide the proof into the following two steps:
(a) Show that such a joint probability measure $K$ exists. Define a probability measure $K\in\mathcal{M}(\Xi\times\Xi,\mathcal{F}\times\mathcal{F})$ such that $K(A\times B):=P(A\cap B)$ for all $A,B\in\mathcal{F}$, and $K\left(\cup^{\infty}_{i=1} A_i\times B_i \right)=\sum^{\infty}_{i=1}K(A_i\times B_i)$ for all countable collections $\big\{ A_i\times B_i \big\}^{\infty}_{i=1}$ of pairwise disjoint sets in $\mathcal{F}\times\mathcal{F}$. It is straightforward to verify that such $K$ is as desired.

(b) Show that $\int_{(s_1\times s_2)\in\Xi\times\Xi}d\big(s_1,s_2\big) K(ds_1\times ds_2)=0$. We prove by contradiction.
 Assume $\int_{(s_1\times s_2)\in\Xi\times\Xi}d\big(s_1,s_2\big) K(ds_1\times ds_2)>\varepsilon$, for some $\varepsilon>0$,
 then we have $K(A)>0$, where $A:=\{(s_1\times s_2)\in\Xi\times\Xi:\; d(s_1,s_2)>0 \}$.
 Let $A_n:=\{(s_1\times s_2)\in\Xi\times\Xi:\; d(s_1,s_2)\ge 1/n \}$. Since $A=\cup^{\infty}_{n=1}\;A_n$,
 it follows that $\exists\;m$ such that $K(A_m)>0$. Since $A_m\in\mathcal{F}\times\mathcal{F}$, 
 it can be expressed as: $A_m=\cup^{\infty}_{i=1}S^1_i\times S^2_i$, where $S^1_i, S^2_i\in\mathcal{F}$.
 This implies that $\exists\;i$ such that $K(S^1_i\times S^2_i)>0$. By the definition of $K$, we have
 $K(S^1_i\times S^2_i)=P(S^1_i\cap S^2_i)>0$, hence $S^1_i\cap S^2_i$ is nonempty, implying that  $\exists\; s\in S^1_i\cap S^2_i$ and hence $(s\times s)\in S^1_i\times S^2_i\subseteq A_m$. However, since $d(\cdot,\cdot)$ is a metric, 
 we have $d(s,s)=0$ which contradicts to $d(s,s)\ge 1/m$.
\end{proof}

\subsection*{Duality theorem of conic linear programming}
\label{appendix:dual-thm-conic-prog}
\begin{theorem}[Proposition~2.8(iii) from \cite{shapiro2001_conic-lp}]
\label{thm:duality_of_conic_linear_optimization}
Let $V$ and $W$ be linear spaces (over real numbers), with $V^{\prime}$ and $W^{\prime}$ being their dual space respectively. Also let $C \subseteq V$ and $K\subseteq W$ be convex cones. Let $A: V \rightarrow W$ be a linear mapping and $A^*: W^{\prime}\rightarrow V^{\prime}$ be its adjoint mapping. Consider the conic linear optimization problem of the following form:
\begin{subequations}
\label{opt:CLP-gen}
\begin{align}
\emph{(P)}\quad &\underset{v\in C}{\emph{min}}  \quad \langle c, v \rangle \\
&\emph{s.t.} \quad A v + b \in K.
\end{align}
\end{subequations}
Then the dual of \emph{(P)} can be formulated as
\begin{subequations}
\begin{align}
\emph{(D)}\quad &\underset{w^*\in -K^*}{\emph{max}} \quad \langle w^*, b \rangle \\
&\emph{s.t.}\quad A^*w^* + c \in C^*,
\end{align}
\end{subequations}
where $C^*$ and $K^*$ are the dual cones of $C$ and $K$ respectively. 
\begin{enumerate}
	\item[\emph{(a)}] The weak duality holds, e.g., \emph{val(P)} $\ge$ \emph{val(D)}.
	\item[\emph{(b)}] If \emph{val(P)} is finite,  $Y$ is a finite dimensional Banach space, and the optimal solution set \emph{Sol(D)} of \emph{(D)}   is nonempty and bounded,  then \emph{val(P)}$=$\emph{val(D)}.  
\end{enumerate}
\end{theorem}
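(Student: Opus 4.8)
The plan is to handle the two parts with entirely different tools: part~(a) is a one-line Lagrangian identity, whereas part~(b) is the substantive statement and is best approached through the perturbation (conjugate) duality framework, with the finite-dimensionality of the range space doing the decisive work.

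For part~(a), I would fix any primal-feasible $v$ (so $v\in C$ and $Av+b\in K$) and any dual-feasible $w^*$ (so $w^*\in -K^*$ and $A^*w^*+c\in C^*$). Using the adjoint identity $\langle w^*, Av\rangle=\langle A^*w^*, v\rangle$, rewrite $\langle c,v\rangle-\langle w^*,b\rangle=\langle c+A^*w^*,\,v\rangle-\langle w^*,\,Av+b\rangle$. The first term is nonnegative because $c+A^*w^*\in C^*$ and $v\in C$, and the second is nonnegative because $w^*\in -K^*$ while $Av+b\in K$. Hence $\langle c,v\rangle\ge\langle w^*,b\rangle$ for every feasible pair, and taking the infimum over $v$ and the supremum over $w^*$ yields val(P)$\ge$val(D).

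For part~(b), I would introduce the primal optimal-value function $\vartheta(y):=\inf\{\langle c,v\rangle:\,v\in C,\ Av+b-y\in K\}$ for $y$ in the range space, which is convex and satisfies $\vartheta(0)=$val(P). A direct conjugate computation (substituting $k:=Av+b-y\in K$ and absorbing a sign into the dual variable) identifies the Fenchel biconjugate value with the dual optimum, $\vartheta^{**}(0)=$val(D), so that the always-valid inequality $\vartheta^{**}(0)\le\vartheta(0)$ reproduces weak duality and shows that closing the gap is \emph{exactly} the assertion that $\vartheta$ is lower semicontinuous (closed) at $0$. In the same computation, the maximizers of the dual, i.e.\ Sol(D), are precisely the subgradients of the closed hull $\operatorname{cl}\vartheta$ at $0$.

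The main obstacle, and where I would spend the effort, is converting the hypothesis ``Sol(D) nonempty and bounded'' into this regularity of $\vartheta$ at $0$. Here the finite-dimensionality is essential: for a closed proper convex function on a finite-dimensional space, the subdifferential at a point is nonempty and bounded if and only if that point lies in the interior of the domain. Thus $\partial(\operatorname{cl}\vartheta)(0)=$Sol(D) being nonempty and bounded forces $0\in\operatorname{int}\operatorname{dom}(\operatorname{cl}\vartheta)$, while finiteness of val(P) guarantees that $\vartheta$ is proper, so the biconjugate is finite and the interior argument is nonvacuous. At an interior point of its domain a proper convex function agrees with its closure, giving $\vartheta(0)=\operatorname{cl}\vartheta(0)=\vartheta^{**}(0)$, i.e.\ val(P)$=$val(D). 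I would make the ``bounded subdifferential $\Rightarrow$ interior of domain'' step precise by a separating-hyperplane/recession argument: a duality gap would produce a vertical supporting direction of the perturbed epigraph at $0$, which translates into an unbounded recession direction of Sol(D), contradicting boundedness. Since the statement is quoted as Proposition~2.8(iii) of \cite{shapiro2001_conic-lp}, I would ultimately defer to that reference, but the perturbation-function mechanism above is the route I would take.
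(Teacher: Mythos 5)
The paper does not actually prove this statement: it is quoted verbatim (as Lemma/Theorem~\ref{thm:duality_of_conic_linear_optimization}) from Proposition~2.8(iii) of \cite{shapiro2001_conic-lp} and used as a black box in the proof of Theorem~\ref{thm:inner-problem-dualization}, so there is no in-paper argument to compare against. Your proposal is, in substance, the standard proof of that cited result and the one the reference itself follows. Part~(a) is complete and correct as written. For part~(b), the perturbation-function route is the right one, and the chain Sol(D) nonempty and bounded $\Rightarrow$ $\partial\vartheta^{**}(0)$ nonempty and bounded $\Rightarrow$ $0\in\operatorname{int}\operatorname{dom}\vartheta^{**}=\operatorname{int}\operatorname{dom}\vartheta$ $\Rightarrow$ $\vartheta(0)=\operatorname{cl}\vartheta(0)=\vartheta^{**}(0)$ is exactly where the finite-dimensionality of the range space enters (via the equivalence between bounded nonempty subdifferentials and interiority of the domain, which fails in infinite dimensions). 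Two small points deserve tightening. First, finiteness of $\mathrm{val(P)}=\vartheta(0)$ alone does not make $\vartheta$ proper: a convex value function finite at $0$ could still take the value $-\infty$ elsewhere if $0$ sits on the boundary of its domain; you need the hypothesis $\mathrm{Sol(D)}\neq\emptyset$ (equivalently $\vartheta^{**}(0)>-\infty$) to exclude that, since an improper $\vartheta$ forces $\vartheta^{*}\equiv+\infty$ and hence an infeasible dual. Second, Sol(D) is $-\partial\vartheta^{**}(0)$ rather than $\partial\vartheta^{**}(0)$ under the sign convention of the stated dual; this is immaterial for nonemptiness and boundedness but should be stated. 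With those repairs your argument is a correct, self-contained justification of a result the paper merely cites (note also that the ``$Y$'' in the statement is the range space $W$; you interpreted it correctly).
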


\subsection*{Convergence of the central cutting surface algorithm for convex semi-infinite programs}
\label{appendix:convg-central-cut-surf}
We summarize the convergence of the central cutting surface algorithm from \cite{mehrotra2015} for solving a general semi-infinite convex optimization problem of the form:
\begin{equation}\label{opt:mehrotra-SIP}
\begin{aligned}
\textrm{minimize}&\;\; x_0 \\
\textrm{subject to}&\;\; g(x,t)\le 0 \quad \forall t\in T, \\
 & \;\; x\in X,
\end{aligned}
\end{equation}
where  the decision variable is $x$ whose first coordinate (and also the objective) is denoted by $x_0$. 
Assume the following conditions are satisfied (following the notation from \cite{mehrotra2015}):
\begin{assumption}\label{ass:appendix}
\begin{enumerate}
\item[\emph{(1)}] The set $X\subseteq\mathbb{R}^n$ is compact and convex.
\item[\emph{(2)}] There exists a Slater point $\overline{x}$ and $\eta>0$ satisfying $\overline{x}\in X$ and $g(\overline{x},t)\le -\eta$ for every $t\in T$.
\item[\emph{(3)}] The function $f(\cdot)$ and $g(\cdot,t)$ are convex and sub-differentiable for every $t\in T$; moreover, these sub-differentials are uniformly bounded: there exists a $B>0$ such that for every $x\in X$ and $t\in T$, every subgradient $d\in \partial_x g(x,t)$ satisfies $\|d\|\le B$. 
\item[\emph{(4)}] For every point $x\in X$ that is not $\varepsilon$-feasible, there exists an oracle that can find in finite time a $t\in T$ satisfying $g(x,t)>0$. 
\end{enumerate}
\end{assumption}
The master problem of \eqref{opt:mehrotra-SIP} at the $k$th iteration has the following form:
\begin{equation}
\begin{aligned}
\textrm{maximize}&\;\; w \\
\textrm{subject to}&\;\; x_0 + w \le M^{(k-1)}, \\
 &\;\; g(x,t) + B w  \le 0 \quad \forall t\in Q^{(k-1)}, \\
 & \;\; x\in X.
\end{aligned}
\end{equation}
 \begin{theorem}{\emph{(\cite{mehrotra2015} Theorems 2-4 and Theorem 8)}}
 \label{thm:convg-cut-surf-mehrotra}
Let $(x^{(k)},w^{(k)})$ be the solution to the master problem at the $k$th iteration, and $\widetilde{x}{(k)}$ be the best know $\varepsilon$-feasible solution at the end of  the $k$th iteration. The following statements hold:
\begin{enumerate}
	\item[\emph{(a)}] If Algorithm~\emph{1} terminates in the $k$th iteration, then $\widetilde{x}^{(k-1)}$ is an $\varepsilon$-optimal solution to \eqref{opt:mehrotra-SIP}.
	\item[\emph{(b)}] If Algorithm~\emph{1} does not terminate, then there exists an index $\widehat{k}$ such that the sequence $\{ \widetilde{x}^{(\widehat{k}+i)} \}^{\infty}_{i=1}$ 
	consists entirely of $\varepsilon$-feasible solutions.
	\item[\emph{(c)}] If Algorithm~\emph{1} does not terminate, then the sequence $\{ \widetilde{x}^{(k)} \}^{\infty}_{i=1}$ has an accumulation point, and each accumulation point is an $\varepsilon$-optimal solution to \eqref{opt:mehrotra-SIP}.
	\item[\emph{(d)}]  Algorithm~\emph{1} converges linearly in objective function value between consecutive feasible cuts after the first $\hat{k}$ iterations, 
	where $\widehat{k}$ satisfies $w^{(\widehat{k})}<\eta/B$. Denote by $x^*$ an optimal solution of \eqref{opt:mehrotra-SIP}, and let $\rho^{(k)}=\frac{\widetilde{x}^{(k)}_0 - x^*_0}{\widetilde{x}^{(k-1)}_0 - x^*_0}$, then we have  
	\begin{equation}
	\rho^{(k)} \le 1-\frac{\eta-Bw^{(k)}}{\eta + B( \overline{x}_0 - x^*_0)}
	\end{equation}
\end{enumerate} 
\end{theorem}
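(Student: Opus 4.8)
The plan is to analyze (\ref{opt:mehrotra-SIP}) through the two scalar quantities that \eqref{opt:master} produces at each iteration: the incumbent objective $M^{(k)}=\widetilde{x}^{(k)}_0$ and the optimal centering value $w^{(k)}$. A single geometric device drives all four parts, namely a Slater mixing argument. Given any $\widehat{x}\in X$ and the Slater point $\overline{x}$ of Assumption~\ref{ass:appendix}(2), convexity gives $g\big((1-\lambda)\widehat{x}+\lambda\overline{x},t\big)\le(1-\lambda)g(\widehat{x},t)-\lambda\eta$ for every $t\in T$, so the mixture is strictly feasible with a margin I can dial via $\lambda$. Feeding such mixtures into \eqref{opt:master} exhibits feasible points carrying a positive $w$, which is precisely what is needed both to detect optimality (when no such $w$ exists) and to quantify per-iteration progress.

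For part (a) I would show that termination, $w^{(k)}=0$, forces $M^{(k-1)}\le\mathrm{val}(\ref{opt:mehrotra-SIP})$. Let $R^{(k)}:=\min\{x_0:\ g(x,t)\le0,\ t\in Q^{(k-1)},\ x\in X\}$ be the pure relaxation value; since $Q^{(k-1)}\subseteq T$, one has $R^{(k)}\le\mathrm{val}(\ref{opt:mehrotra-SIP})$. If $R^{(k)}<M^{(k-1)}$, then mixing a relaxation minimizer with $\overline{x}$ and taking $\lambda$ small yields a point feasible for \eqref{opt:master} with strictly positive $w$, contradicting $w^{(k)}=0$; hence $M^{(k-1)}\le R^{(k)}\le\mathrm{val}(\ref{opt:mehrotra-SIP})$. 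Since the incumbent $\widetilde{x}^{(k-1)}$ is $\varepsilon$-feasible by construction (it is only ever overwritten by points the oracle certifies as $\varepsilon$-feasible), Definition~\ref{def:accuracy-gen-SIP} gives $\varepsilon$-optimality.

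Parts (b) and (c) would rest on showing $w^{(k)}\to0$ by compactness: if not, a convergent subsequence of master solutions $x^{(j)}$ together with the cut points $s^{(j)}$ they generate (both in compact sets) would, by continuity of $g$ and the deep-cut inequality $g(x^{(j')},s^{(j)})\le-Bw^{(j')}$ imposed on all later iterates, contradict $g(x^{(j)},s^{(j)})>0$; here the uniform subgradient bound $B$ of Assumption~\ref{ass:appendix}(3) controls the cuts, in the spirit of the packing argument in the proof of Theorem~\ref{thm:finite-converg}. I would then argue that the incumbent is refreshed at least once: were it frozen at the strict upper bound $U>\mathrm{val}(\ref{opt:mehrotra-SIP})$, a Slater mixture of an optimal $x^*$ with $\overline{x}$ would stay feasible for \eqref{opt:master} with a margin bounded below independently of $k$, contradicting $w^{(k)}\to0$. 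After the first refresh every $\widetilde{x}^{(\widehat{k}+i)}$ is $\varepsilon$-feasible, giving (b); for (c), Bolzano--Weierstrass yields an accumulation point $x^{\infty}$ of $\{\widetilde{x}^{(k)}\}$, whose $\varepsilon$-feasibility survives the limit by continuity and whose objective obeys $x^{\infty}_0\le\mathrm{val}(\ref{opt:mehrotra-SIP})$ by the same margin argument, so $x^{\infty}$ is $\varepsilon$-optimal.

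The quantitative part (d) is where the real work lies, and I expect it to be the main obstacle. Fix an optimal $x^*$ and set $\Delta^{(k-1)}=M^{(k-1)}-x^*_0$. The first ingredient is the update inequality: at an iteration refreshing the incumbent, the master constraint $x^{(k)}_0+w^{(k)}\le M^{(k-1)}$ gives $\Delta^{(k)}\le\Delta^{(k-1)}-w^{(k)}$, whence $\rho^{(k)}\le1-w^{(k)}/\Delta^{(k-1)}$. The second ingredient lower-bounds the centering value by the Slater mixture $x_\lambda=(1-\lambda)x^*+\lambda\overline{x}$, which satisfies $g(x_\lambda,t)\le-\lambda\eta$, so $(x_\lambda,\lambda\eta/B)$ is feasible for \eqref{opt:master} provided $x_{\lambda,0}+\lambda\eta/B\le M^{(k-1)}$, i.e. provided $\lambda\le\Delta^{(k-1)}/(\,\overline{x}_0-x^*_0+\eta/B\,)$. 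Taking $\lambda$ at this threshold—an admissible convex-combination parameter, $\le1$, once the gap is small, which is the role of the index $\widehat{k}$ characterized by $w^{(\widehat{k})}<\eta/B$—bounds $w^{(k)}$ from below and, after substitution, produces the stated rate $\rho^{(k)}\le1-\frac{\eta-Bw^{(k)}}{\eta+B(\overline{x}_0-x^*_0)}$. The delicate points I would have to handle are that incumbents are only $\varepsilon$-feasible, so $x^*$ must be read as the comparator whose objective the $\varepsilon$-feasible iterates may undershoot and the $\varepsilon$-slack carried through the gap bookkeeping to keep $\rho^{(k)}$ well defined, and that the inequality is asserted for \emph{every} $\eta>0$, which is exactly what later licenses optimizing over $\eta$ (as in the proof of Theorem~\ref{thm:convergence-cutting-surf-alg}) to extract an explicit contraction factor.
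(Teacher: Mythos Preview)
The paper does not contain a proof of this theorem. Theorem~\ref{thm:convg-cut-surf-mehrotra} is stated in Appendix~\ref{appendix:convg-central-cut-surf} purely as a citation of Theorems~2--4 and Theorem~8 of \cite{mehrotra2015}; it is invoked as a black box inside the proof of Theorem~\ref{thm:convergence-cutting-surf-alg}, but no argument for it is given here. Consequently there is no ``paper's own proof'' to compare your proposal against.

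That said, your sketch is broadly the right shape for how the Mehrotra--Papp results are actually established: the Slater-mixing construction $x_\lambda=(1-\lambda)\widehat{x}+\lambda\overline{x}$ is indeed the workhorse, both for certifying that $w^{(k)}=0$ implies optimality and for lower-bounding $w^{(k)}$ in terms of the current gap $\Delta^{(k-1)}$. One point to watch in part~(d): your computation as written delivers $\rho^{(k)}\le 1-\eta/\big(\eta+B(\overline{x}_0-x^*_0)\big)$, which is \emph{stronger} than the stated bound with $\eta-Bw^{(k)}$ in the numerator; the weaker form in the theorem arises from a slightly different bookkeeping of the two master constraints (the objective cap and the cut constraints) when they are not simultaneously tight at the chosen $\lambda$, and you should trace exactly which inequality slackens to see where the extra $-Bw^{(k)}$ enters. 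Also, your handling of the cut-dropping rule (Step~7) is absent; the compactness/packing argument for $w^{(k)}\to0$ must survive the removal of non-binding cuts, which is where the parameter $\alpha>1$ earns its keep.
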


\end{document}